\documentclass[pdftex,pra,aps,onecolumn,twoside,nofootinbib,showkeys]{revtex4}
\pdfoutput=1
\usepackage[utf8]{inputenc}
\usepackage[dvips]{graphicx}
\usepackage[stable]{footmisc}
\usepackage{amsmath,amsthm,amssymb}
\usepackage{youngtab}
\usepackage[all]{xy}
\usepackage{dsfont}
\usepackage{appendix}
\usepackage{enumerate}
\usepackage{youngtab}
\setcounter{MaxMatrixCols}{20}

\numberwithin{equation}{section}

\newcommand{\<}{\langle}
\renewcommand{\>}{\rangle}
\newcommand\be{\begin{equation}}
\newcommand\ee{\end{equation}}
\newcommand\ot{\otimes}

\newcommand\sgn{\operatorname{sgn}}

\newcommand\rmi{\mathrm{i}\mspace{1mu}}

\newcommand\bea{\begin{array}}
\newcommand\eea{\end{array}}
\newcommand\ben{\begin{eqnarray}}
\newcommand\een{\end{eqnarray}}

\newcommand\bei{\begin{itemize}}
\newcommand\eei{\end{itemize}}
\newcommand\bee{\begin{enumerate}}
\newcommand\eee{\end{enumerate}}

\usepackage{marginnote}

\newcommand{\tr}{{\rm tr}}
\def\ot{\otimes}

\def\bei{\begin{itemize}}
\def\eei{\end{itemize}}

\newtheorem{theorem}{Theorem}[section]
\newtheorem{fact}[theorem]{Fact}
\newtheorem{lemma}[theorem]{Lemma}
\newtheorem{corollary}[theorem]{Corollary}
\newtheorem{example}[theorem]{Example}
\newtheorem{proposition}[theorem]{Proposition}
\newtheorem{definition}[theorem]{Definition}

\newtheorem{remark}[theorem]{Remark}

\newtheorem*{rep@theorem}{\rep@title}
\newcommand{\newreptheorem}[2]{%
\newenvironment{rep#1}[1]{%
 \def\rep@title{#2 \ref{##1} (restatement)}%
 \begin{rep@theorem}}%
 {\end{rep@theorem}}}
\makeatother

\newreptheorem{theorem}{Theorem}
\newreptheorem{lemma}{Lemma}
\newreptheorem{definition}{Definition}

{\par\addvspace{\medskipamount}\noindent\textbf{Examples.}\hspace{1ex}}%
{\par\medskip}


\DeclareUnicodeCharacter{981}{\phi}
\DeclareUnicodeCharacter{348}{{\hat{S}}}
\DeclareUnicodeCharacter{8224}{{\dagger}}

\usepackage{color}

\newcommand\Hom{\operatorname{Hom}}

\begin{document}

\title{Explicit constructions of unitary transformations between equivalent irreducible representations}

\author{Marek Mozrzymas$^1$, Micha{\l} Studzi{\'n}ski$^{2,3}$,
  Micha{\l} Horodecki$^{2,3}$}

\affiliation{
$^1$ Institute for Theoretical Physics, University of Wroc{\l}aw, 50-204 Wroc{\l}aw, Poland\\
$^2$ Faculty of Mathematics, Physics and Informatics, University of Gda{\'n}sk, 80-952 Gda{\'n}sk, Poland\\
$^3$ Quantum Information Centre of Gda\'{n}sk, 81-824 Sopot, Poland}

\date{\today}


\begin{abstract}
Irreducible representations (irreps) of a finite group $G$ are
equivalent if there exists a similarity transformation between them.
In this paper, we describe an explicit algorithm for constructing this
transformation between a pair of equivalent irreps, assuming we are
given an algorithm to compute the matrix elements of these irreps.
Along the way, we derive a generalization of the classical
orthogonality relations for matrix elements of  irreps
of finite groups.
We give an explicit form of such unitary matrices
for the important case of conjugated Young-Yamanouchi representations,
when our group $G$ is symmetric group $S(N)$.   
\end{abstract}

\keywords{irreducible representations, equivalent representations, Young-Yamanouchi basis, symmetric group}

\maketitle 

\section{Introduction}
Group representation theory is a powerful tool in physics for studying
systems with symmetries.   When performing numerical optimization or
simulation of physical systems, representation theory can dramatically
simplify the required calculations.  There are examples of this for
the many-electron problem in physics and quantum
chemistry~\cite{Pauncz,Pauncz2,Pauncz3}, in quantum information
theory~\cite{Eggeling1,springerlink:10.1007/PL00001027,PhysRevLett.82.4344,PhysRevA.84.022320,Whitfield1,Czechlewski1,Cwiklinski1,Studzinski2,Cwiklinski2,Matsumoto1}
and elsewhere.

Motivated by this wide range of possible applications we focus in this
paper on relations between irreducible but equivalent representations
of some finite group $G$. The irreps of finite groups can be calculated  by using for example program GAP~\cite{GAP}. Two equivalent representations
of group $G$ can be mapped to each other by a similarity transformation
determined by some nonsingular matrix $X$. 
So in fact two sets of matrices representing the group elements in each irrep are conjugated. A general method of solving the conjugacy problem for two arbitrary sets   of elements of a finite-dimensional algebra over a large class of fields is described in paper~\cite{Chistov}. The method presented in mentioned paper is based on the solutions systems of linear equations, which yields some linear subspaces and  subalgebras of the algebra and it is shown that the generators the linear subspace (which are in fact a subalgebra module), if they exists, are solutions of the conjugacy problem. In addition in case of the matrix algebra over real algebraic field it is possible, by calculating  a square root of some symmetric positive definite matrix, to construct an orthogonal matrix, which is the solution of the matrix conjugacy problem. The algorithms that lead to  the solution of the conjugacy problem are  of the polynomial time.
In this paper we show that in the case when we have to deal with very special sets of matrices representing the group elements in two equivalent irreducible representations it is possible to construct an unitary conjugation of these sets in a different way, using very particular properties of irreducible representations of finite groups, in particular using the orthogonality relations for irreps. As a  result we derive an explicit formula for the unitary matrix defining the similarity transformation. In our method, instead of solving the systems of linear equations one has to find a nonzero normalization factor in the formula for the unitary matrix, which is also a problem of polynomial time, but it seems to be easier to calculate. This is because due to use of the group properties, we have to deal with smaller number of equations. We give several examples how this method works in practice for the permutation group $S(n)$. 
 We also analyze the similarity
transforms for a class of equivalent pairs of permutation-group irreps
and show that the
transformation matrices have a very simple anti-diagonal form. Using
general  results of the construction we also formulate a
generalization of the well-known classical orthogonality relations for
irreps of a finite group $G$. 

This paper is organized as follows. In the Section~\ref{preliminaries} we formulate the problem and recall basic statements from group representation theory, which play important role in next sections.
In the Section~\ref{gen} we describe an explicit method
 to compute unitary transformation
matrices between two irreducible but equivalent representations for
some finite group $G$, and we present the full solution of the problem with details and discussion. In particular we show a few
interesting facts regarding some properties of such unitary
transformations (doubly stochastic property, generalized orthogonality
property for irreducible representations etc.). In
Section~\ref{symm} we apply results from Section~\ref{gen} to the
symmetric group $S(N)$ and present examples 
for $N=3,4,5,6$ which show how our algorithm works in practice.
Next, in the Section~\ref{symm2} we state and prove Theorem~\ref{hypo1} and Proposition~\ref{MH} in
which say that the unitary matrix which maps
conjugated Young-Yamanouchi irreps of $S(N)$ consist simply of 
$\pm 1$ entries along the anti-diagonal. Finally in the Section~\ref{appl} we present some mathematical application of our result to projector onto specific subspace suggested by Schur-Weyl duality~\cite{RWallach}.

\section{Preliminaries}
\label{preliminaries}
In this section we give some basic ideas regarding similarity transformation between irreducible and equivalent irreps of some finite group $G$. Most of the informations in this chapter is taken from~\cite{Procesi,Fulton,RWallach,JQChen}. We start from the following definition:
\begin{definition}
\label{th:general}
We say that two different  irreducible representation (irreps) $\vartheta $ and $\psi $  of the finite group $G$ are equivalent when
\be
\label{1}
\exists X\in \operatorname{GL}(n,\mathbb{C}):X^{-1}\vartheta (g)X=\psi (g),\quad \forall g\in G, 
\ee
where $n$ is the dimension of the irreps $\vartheta $ and $\psi $.
\end{definition}
Our task is to find an explicit formula for the transformation matrices $X$ from Definition~\ref{th:general}.

The form of matrix $X\in \operatorname{GL}(n,\mathbb{C})$ in equation~\eqref{1} in Definition~\ref{th:general} is strongly restricted by the group $G$ and its representations $\vartheta$ and $\psi$. In fact we have 

\begin{proposition}
\label{prop4}
Suppose that the matrix representations $\vartheta, \psi$ (which are not necessarily unitary) of finite group $G$ are irreducible and equivalent, then the matrix $X\in \operatorname{GL}(n,\mathbb{C})$ which satisfies
\be
\forall  g\in G \quad X^{-1}\vartheta(g) X=\psi(g)
\ee
is unique up to non-zero scalar multiple.
\end{proposition}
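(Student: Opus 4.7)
The plan is to reduce the statement to a direct application of Schur's lemma. Suppose $X,Y\in\operatorname{GL}(n,\mathbb{C})$ both satisfy the intertwining relation, i.e. $X^{-1}\vartheta(g)X=\psi(g)$ and $Y^{-1}\vartheta(g)Y=\psi(g)$ for every $g\in G$. Equating the right-hand sides and rearranging gives $\vartheta(g)(YX^{-1}) = (YX^{-1})\vartheta(g)$ for all $g\in G$, so the matrix $A := YX^{-1}$ lies in the commutant of the representation $\vartheta$.

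Next I would invoke Schur's lemma in its standard form over $\mathbb{C}$: since $\vartheta$ is an irreducible representation on a finite-dimensional complex vector space, any matrix commuting with all $\vartheta(g)$ must be a scalar multiple of the identity. Hence $A=\lambda\, \mathbb{I}$ for some $\lambda\in\mathbb{C}$. Because $X$ and $Y$ are invertible, $A$ is invertible, which forces $\lambda\neq 0$. Therefore $Y=\lambda X$, proving that any two intertwiners differ by a non-zero scalar factor.

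There is essentially no real obstacle here; the only point worth checking is that the hypotheses of Schur's lemma are met, namely that the ground field is algebraically closed (so that the commuting operator $A$ has an eigenvalue and the standard argument applies) and that $\vartheta$ is irreducible as stated. Both are guaranteed by the assumptions of the proposition. The conclusion that $X$ is unique up to a non-zero scalar multiple follows immediately, and no further computation is required.
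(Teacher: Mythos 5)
Your proof is correct, but it takes a different route than the paper. You reduce uniqueness directly to the scalar form of Schur's lemma: if $X$ and $Y$ both intertwine, then $A=YX^{-1}$ commutes with every $\vartheta(g)$, hence $A=\lambda\mathds{1}$ with $\lambda\neq 0$ by invertibility, so $Y=\lambda X$. The paper instead derives Proposition~\ref{prop4} as a corollary of Theorem~\ref{thm:MM}: it packages all candidate matrices as elements of $\operatorname{M}(n,\mathbb{C})$ carrying the representation $\Psi(g)(X)=\vartheta(g)X\psi(g^{-1})$, observes that an intertwiner is exactly a fixed point of $\Psi$ (a copy of the trivial representation), and uses the fact that the trivial irrep occurs in $\Psi$ with multiplicity one, i.e. $\dim\operatorname{Hom}_G(\psi,\vartheta)=1$. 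Your argument is more elementary and self-contained (the only input is the commutant version of Schur's lemma over $\mathbb{C}$, and you correctly note where algebraic closedness enters), whereas the paper's formulation is more structural: viewing intertwiners as the isotypic component of the trivial representation inside $\vartheta\otimes\psi^{*}$ is precisely the viewpoint exploited later in Section~\ref{gen}, where averaging over the group and the orthogonality relations of Proposition~\ref{prop44} are used to project onto that one-dimensional component and extract the explicit formula for $U$. So the two arguments buy different things: yours is the quickest path to uniqueness; the paper's sets up the machinery that also yields the constructive formula.
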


This statement is a corollary of the following 

\begin{theorem}
\label{thm:MM}
Let $\vartheta, \psi$ be equivalent matrix irreps of $G$ (not necessarily unitary). Then the map
\be
\label{eqMM1}
\begin{split}
&\forall  g \in G \quad \Psi:g \rightarrow \operatorname{Aut}\left(\operatorname{M}(n,\mathbb{C})\right) \\
&\forall  X \in \operatorname{M}(n,\mathbb{C}) \quad \Psi(g)(X)=\vartheta(g)X\psi(g^{-1})
\end{split}
\ee
defines a representation of the group $G$ in the linear space $\operatorname{M}(n,\mathbb{C})$ over $\mathbb{C}$. The representation $\Psi$ is reducible and the one-dimensional identity representation of $G$ is included in $\Psi$ only once i.e. there is only one, up to scalar multiple, matrix $X$ which satisfies
\be
\forall g\in G \quad \vartheta(g)X\psi(g^{-1})=X.
\ee
\end{theorem}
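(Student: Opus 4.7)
The plan is to proceed in three steps. First, I verify the homomorphism property: $\Psi(e)(X) = \vartheta(e)\,X\,\psi(e^{-1}) = X$, and for any $g,h \in G$,
\[
\Psi(g)\Psi(h)(X) = \vartheta(g)\vartheta(h)\,X\,\psi(h^{-1})\psi(g^{-1}) = \vartheta(gh)\,X\,\psi((gh)^{-1}) = \Psi(gh)(X),
\]
so $\Psi$ is a well-defined representation of $G$ on $\operatorname{M}(n,\mathbb{C})$, of dimension $n^{2}$.

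Second, I identify the fixed subspace of $\Psi$ with the space of intertwiners. The condition $\Psi(g)(X)=X$ for every $g$ is equivalent to $\vartheta(g)\,X = X\,\psi(g)$ for every $g$, i.e.\ $X \in \Hom_{G}(\psi,\vartheta)$. The hypothesis that $\vartheta$ and $\psi$ are equivalent means that this space contains an invertible element $X_{0}$, so in particular it is nonzero, and the trivial representation occurs in $\Psi$ at least once.

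Third, I apply Schur's lemma to show that this fixed subspace is exactly one-dimensional. If $X$ is any intertwiner, then $X_{0}^{-1}X$ commutes with $\psi(g)$ for all $g$; irreducibility of $\psi$ over $\mathbb{C}$ forces $X_{0}^{-1}X = \lambda I$, so $X = \lambda X_{0}$. This shows the trivial representation appears in $\Psi$ with multiplicity exactly one, which is equivalent to the uniqueness-up-to-scalar statement of Proposition~\ref{prop4}. Reducibility of $\Psi$ then follows whenever $n>1$, since $\dim \Psi = n^{2}$ strictly exceeds the dimension of the trivial subrepresentation it contains.

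The computations are essentially routine; the only small subtlety is that $\vartheta$ and $\psi$ are not assumed unitary, so one should invoke Schur's lemma directly rather than the Hermitian form of character orthogonality. A cleaner conceptual view is that $\operatorname{M}(n,\mathbb{C}) \cong \Hom(V_{\psi},V_{\vartheta})$ carries $\Psi$ as the representation $\vartheta \otimes \psi^{*}$, whose multiplicity of the trivial representation equals $\dim \Hom_{G}(V_{\psi},V_{\vartheta})$, but this is merely a repackaging of the Schur-lemma argument above.
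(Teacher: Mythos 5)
Your proof is correct. The paper actually states Theorem~\ref{thm:MM} without supplying a proof (only the subsequent Remark notes that the case $\vartheta=\psi$ reduces to Schur's lemma), and your argument --- verifying the homomorphism property, identifying the fixed points of $\Psi$ with the intertwiners $\{X : \vartheta(g)X=X\psi(g)\ \forall g\}$, using the invertible intertwiner $X_{0}$ guaranteed by equivalence, and applying Schur's lemma to $X_{0}^{-1}X$ to get one-dimensionality of that space --- is exactly the standard route the paper implicitly relies on; your explicit caveats (complete reducibility of $\Psi$ over $\mathbb{C}$ to equate the fixed-space dimension with the multiplicity of the trivial representation, and the proviso $n>1$ for the reducibility claim) appropriately tie up the only loose ends in the statement.
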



\begin{remark}
In the particular case when $\vartheta=\psi$ the matrix $X$ is up to a scalar multiple equal to \noindent
\(\mathds{1}\), which in this case generates the identity irrep in the representation $\Psi$, then the statement of the Theorem~\ref{thm:MM} follows directly from the Schur's Lemma.
\end{remark}

Now let us come back to the Proposition~\ref{prop4}. The equation~\eqref{1} from Definition~\ref{th:general} may be written in the form
\be
\forall g\in G \quad \vartheta(g)X\psi(g^{-1})=X
\ee
given in the Theorem~\ref{thm:MM} and this theorem states that such a matrix $X$ is unique up to scalar multiple.

If the irreps $\vartheta, \psi$ are unitary then the matrix $X$ may be chosen to be unitary, in fact we
have

\begin{lemma}
\label{problem:1}
If $\vartheta $ and $\psi $ are two different but equivalent unitary and
irreducible matrix representations, in $\operatorname{M}(n,\mathbb{C}),$ of a finite group $G$, then
\be
\label{eq:0}
\exists U\in U(n):U^{\dagger}\vartheta (g)U=\psi (g),\quad \forall g\in G. 
\ee
\end{lemma}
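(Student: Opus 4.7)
The plan is to promote the (not necessarily unitary) intertwiner $X$ supplied by Definition~\ref{th:general} (whose existence and uniqueness up to scalar is guaranteed by Proposition~\ref{prop4} / Theorem~\ref{thm:MM}) to a unitary one by showing that $X^\dagger X$ is a positive scalar multiple of the identity.

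Concretely, I would proceed as follows. First, since $\vartheta$ and $\psi$ are equivalent, pick $X\in\operatorname{GL}(n,\mathbb{C})$ with $\vartheta(g)X=X\psi(g)$ for every $g\in G$. Because both representations are unitary, $\vartheta(g)^\dagger=\vartheta(g^{-1})$ and $\psi(g)^\dagger=\psi(g^{-1})$, so taking the Hermitian adjoint of the intertwining relation and then replacing $g$ by $g^{-1}$ gives
\be
X^\dagger\vartheta(g)=\psi(g)X^\dagger,\qquad \forall g\in G.
\ee

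Next I would combine these two identities. Left-multiplying $\vartheta(g)X=X\psi(g)$ by $X^\dagger$ and using the displayed identity yields
\be
X^\dagger X\,\psi(g)=\psi(g)\,X^\dagger X,\qquad \forall g\in G,
\ee
so that the matrix $X^\dagger X$ commutes with the whole irreducible representation $\psi$. By Schur's lemma applied to the irrep $\psi$, there exists $\lambda\in\mathbb{C}$ with $X^\dagger X=\lambda\,\mathds{1}$. Positivity and non-singularity of $X^\dagger X$ force $\lambda>0$.

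Finally, set $U:=\lambda^{-1/2}X$. Then $U^\dagger U=\lambda^{-1}X^\dagger X=\mathds{1}$, so $U\in U(n)$, and the intertwining relation is homogeneous in $X$, so $\vartheta(g)U=U\psi(g)$ still holds, which rearranges to $U^\dagger\vartheta(g)U=\psi(g)$ as required. There is no real obstacle here; the one point to be careful about is that both the unitarity of $\vartheta$ and of $\psi$ are used in deriving the adjoint intertwining relation, and that Schur's lemma is applied to an irrep (either $\vartheta$ or $\psi$ works symmetrically) — equivalently, this is just observing that the polar factor of $X$ is trivial because its positive part is a scalar.
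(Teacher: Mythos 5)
Your proposal is correct and follows essentially the same route as the paper's own proof: take the adjoint of the intertwining relation using unitarity of both irreps, conclude via Schur's lemma that $X^{\dagger}X$ is a positive scalar multiple of the identity, and rescale $X$ to obtain the unitary $U$. No gaps; this matches the argument in the paper.
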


\begin{proof}
\bigskip We have 
\be
\label{2}
X^{-1}\vartheta (g)X=\psi (g),~\forall g\in G\quad \Leftrightarrow \quad
\vartheta (g)X=X\psi (g),~\forall g\in G
\ee
and from the unitarity of $\vartheta $ and $\psi $ we get 
\be
\label{3}
X^{\dagger}\vartheta (g^{-1})=\psi (g^{-1})X^{\dagger},\quad \forall g\in G
\ee
and 
\be
\label{4}
X^{\dagger}X=\psi (g^{-1})X^{\dagger}X\psi (g),~\forall g\in G\quad \Leftrightarrow
\quad \psi (g)X^{\dagger}X=X^{\dagger}X\psi (g),~\forall g\in G.
\ee
The irreducibility of the representation $\psi $ and the Schur Lemma implies 
\be
\label{5}
X^{\dagger}X=\alpha \mathbf{1}_{n}:\alpha >0\Rightarrow \quad X^{\dagger}=\alpha X^{-1}.
\ee
Define 
\be
\label{6}
U=\frac{1}{\sqrt{\alpha }}X\Rightarrow U^{\dagger}U=\frac{1}{\alpha }X^{\dagger}X=%
\mathbf{1}_{n},
\ee
so the matrix $U$ is unitary and satisfies%
\be
U^{\dagger}\vartheta (g)U=\sqrt{\alpha }X^{-1}\vartheta (g)\frac{1}{\sqrt{\alpha }}%
X=\psi (g),\quad \forall g\in G.
\ee
\end{proof}

In the following we will assume that irreps $\vartheta$ and  $\psi$ are unitary and our task is to find an explicit formula for the unitary matrix $U,$
which gives the similarity transformation between the representations $%
\vartheta $ and $\psi $. Such a matrix $U$ is not unique and we have

\begin{lemma}
\label{lemma:2}
If $U$ is such that 
\be
\label{eq:2}
U^{\dagger}\vartheta (g)U=\psi (g),\quad \forall g\in G,
\ee
then $U^{\prime }=\operatorname{e}^{\rmi\mu }U:\mu \in \mathbb{R}$ also defines an unitary similarity between $\vartheta $ and $\psi .$ 
\be
\label{eq:3}
U^{\prime \dagger}\vartheta (g)U^{\prime }=\psi (g),\quad \forall g\in G
\ee
\end{lemma}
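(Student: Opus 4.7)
The plan is to proceed by direct substitution, since the claim is essentially a statement that the conjugation relation is phase-invariant. The only ingredient is the reality of $\mu$, which ensures that $\operatorname{e}^{\rmi\mu}$ is a unimodular scalar, so that both unitarity and the intertwining identity are preserved.

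First I would record that if $U^{\prime}=\operatorname{e}^{\rmi\mu}U$ with $\mu\in\mathbb{R}$, then $U^{\prime\dagger}=\operatorname{e}^{-\rmi\mu}U^{\dagger}$. Unitarity of $U^{\prime}$ then follows at once from
\be
U^{\prime\dagger}U^{\prime}=\operatorname{e}^{-\rmi\mu}U^{\dagger}\operatorname{e}^{\rmi\mu}U=\operatorname{e}^{-\rmi\mu}\operatorname{e}^{\rmi\mu}\,U^{\dagger}U=U^{\dagger}U=\mathbf{1}_{n},
\ee
using the unitarity of $U$ established in Lemma~\ref{problem:1}. An identical computation for $U^{\prime}U^{\prime\dagger}$ (or appeal to the fact that a square matrix with $U^{\prime\dagger}U^{\prime}=\mathbf{1}_{n}$ is unitary) completes this step.

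Next I would substitute $U^{\prime}$ into the left-hand side of~\eqref{eq:3} and pull out the scalars, which commute with the matrix $\vartheta(g)$:
\be
U^{\prime\dagger}\vartheta(g)U^{\prime}=\operatorname{e}^{-\rmi\mu}U^{\dagger}\vartheta(g)\operatorname{e}^{\rmi\mu}U=\operatorname{e}^{-\rmi\mu}\operatorname{e}^{\rmi\mu}\,U^{\dagger}\vartheta(g)U=U^{\dagger}\vartheta(g)U=\psi(g),
\ee
where the last equality is the hypothesis~\eqref{eq:2}. This holds for every $g\in G$, which is exactly~\eqref{eq:3}.

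There is no real obstacle here; the only point worth emphasising is that the conclusion relies on $\operatorname{e}^{\rmi\mu}$ being a \emph{scalar} (hence commuting with $\vartheta(g)$) and being of unit modulus (so that $\operatorname{e}^{-\rmi\mu}\operatorname{e}^{\rmi\mu}=1$), both of which are guaranteed by $\mu\in\mathbb{R}$. Together with Proposition~\ref{prop4}, this lemma makes precise the fact that the similarity transformation between two equivalent unitary irreps is unique up to an overall global phase.
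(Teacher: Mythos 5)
Your proof is correct and is exactly the direct verification one would expect: the paper treats this lemma as immediate and gives no separate proof, and your substitution argument (scalar $\operatorname{e}^{\rmi\mu}$ commutes with $\vartheta(g)$ and has unit modulus, so both unitarity and the intertwining relation are preserved) is precisely the reasoning behind it.
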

In the next section we show how to construct unitary transformation matrices $U$ which map between two equivalent irreps of some finite group $G$.

\section{General method of construction}
\label{gen}
In this section we present explicit construction method of the unitary matrices which represent similarity transformation between irreducible but equivalent representations of some finite group $G$.
 
In order to derive the formula for the matrix $U$, we consider the equation~\eqref{eq:2} which contains all condition on $U$.
The $\operatorname{RHS}$ of the equation~\eqref{eq:2} for $U=(u_{ij})$ may written in the following way
\be
\sum_{st}u_{bs}^{\dagger}\vartheta _{st}(g)u_{tj}=\sum_{st}\overline{u}
_{sb}u_{tj}\vartheta _{st}(g)=\sum_{st}(\overline{U}\otimes U)_{st,bj}\vartheta
_{st}(g), 
\ee
therefore the equation for $U=(u_{ij})$ takes the form 
\be
\sum_{st}(\overline{U}\otimes U)_{st,bj}\vartheta _{st}(g)=\sum_{st}(
\overline{U}\otimes U)_{bj,st}^{t}\vartheta _{st}(g)=\psi_{bj}(g),\quad
\forall g\in G, 
\ee
where we have used 
\be
X=(x_{ij}),\quad Y=(y_{kl})\Rightarrow (X\otimes Y)_{jk,il}=x_{ji}y_{kl}. 
\ee
In the next step we use the orthogonality relations for the irreducible
representations of finite groups which may be formulated as follows

\begin{proposition}
\label{prop44}
\bigskip\ Suppose that $\vartheta$ and $\psi $ are two  
irreducible matrix representations of a finite group $G.$ Then
\be
\label{ort}
\sum_{g\in G}\psi _{ij}(g)\vartheta _{kl}(g^{-1})=
\begin{cases}
0 & \text{if $\psi$ and $\vartheta$ are inequivalent}\\
\frac{|G|}{n}\delta _{jk}\delta _{il}
& \text{if $\psi$ and $\vartheta$ are equal,}
\end{cases}
\ee
where by $|G|$ we denote cardinality of group $G$. Equation~\eqref{ort} does not apply if  $\psi$ and $\vartheta$ are equivalent
but not equal.
\end{proposition}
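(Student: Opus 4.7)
The plan is the classical averaging-trick proof, invoking Schur's Lemma and a trace computation. For any $n \times n$ matrix $A$, I would form
\[
M(A) = \sum_{g \in G} \vartheta(g^{-1})\, A\, \psi(g),
\]
and verify by the substitution $g \mapsto g h$ in the summation variable that $\vartheta(h)\, M(A) = M(A)\, \psi(h)$ for every $h \in G$. Thus $M(A)$ is an intertwiner between $\psi$ and $\vartheta$.

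Schur's Lemma then splits the argument into the two cases. If $\psi$ and $\vartheta$ are inequivalent, no nonzero intertwiner exists, so $M(A) = 0$ identically for every choice of $A$. If $\psi = \vartheta$, the intertwiner must be a scalar multiple of the identity, $M(A) = c(A)\, \mathbf{1}_n$; taking traces and using $\operatorname{tr}(\vartheta(g^{-1}) A\, \vartheta(g)) = \operatorname{tr}(A)$ gives $c(A) = |G|\, \operatorname{tr}(A)/n$.

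To extract the stated scalar identities, I would specialize to $A = E_{jk}$, the matrix unit with a single $1$ at position $(j,k)$. Then $[M(E_{jk})]_{li} = \sum_g \vartheta_{lj}(g^{-1})\, \psi_{ki}(g)$, which vanishes in the inequivalent case and equals $c(E_{jk})\, \delta_{li} = \frac{|G|}{n}\, \delta_{jk}\, \delta_{li}$ in the equal case. A straightforward renaming of the four dummy indices then yields \eqref{ort}. The final remark of the proposition---that the relation fails when $\psi$ and $\vartheta$ are merely equivalent but not literally equal---is also immediate from this picture: in that case $M(A)$ is a general invertible conjugation matrix (unique up to scalar by Proposition~\ref{prop4}) rather than a multiple of $\mathbf{1}_n$, so its $(l,i)$-entry has no reason to be proportional to $\delta_{li}$.

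I do not anticipate a genuine obstacle. The only real care required is index bookkeeping, since the matrix order $\vartheta(g^{-1}) A\, \psi(g)$ is the reverse of the factor order $\psi_{ij}(g)\, \vartheta_{kl}(g^{-1})$ appearing under the sum in the proposition; but once the operator identity $M(A) = 0$ or $M(A) = c(A)\, \mathbf{1}_n$ is in hand, the scalar version follows from a purely mechanical relabeling of dummies.
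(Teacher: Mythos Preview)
Your argument is the standard Schur-averaging proof and it is correct; the index bookkeeping you flag is indeed the only delicate point, and your relabeling $(l,j,k,i)\mapsto(k,l,i,j)$ turns $[M(E_{jk})]_{li}=\frac{|G|}{n}\delta_{jk}\delta_{li}$ into precisely~\eqref{ort}.

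There is nothing to compare against, however: the paper does not prove Proposition~\ref{prop44}. It is quoted as the classical orthogonality relation (a textbook result, cf.\ the references~\cite{Fulton,RWallach,JQChen}) and then immediately applied to derive the block equation $\overline{U}\otimes U=A$. So your proposal simply supplies the omitted standard proof; it is exactly what one would find in any of the cited texts, and it coheres with the paper's framework (in particular, your remark about the equivalent-but-not-equal case is the content of Theorem~\ref{thm:MM}/Proposition~\ref{prop4}).
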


Using this Proposition we get 
\be
\sum_{jk}\sum_{g\in G}(\overline{U}\otimes U)_{bj,{}st}^{t}\vartheta
_{st}(g)\vartheta _{ia}(g^{-1})=\sum_{g\in G}\psi _{bj}(g)\vartheta
_{ia}(g^{-1}),
\ee
\be
(\overline{U}\otimes U)_{bj,{}ai}^{t}=\frac{n}{|G|}\sum_{g\in
G}\psi _{bj}(g)\vartheta _{ia}(g^{-1})=\frac{n}{|G|}\sum_{g\in G}\psi _{bj}(g)%
\overline{\vartheta _{ai}(g)},
\ee
and finally we get the equation where the desired matrix $U$ and the
given representations $\vartheta $ and $\psi $ are separated:
\be
(\overline{U}\otimes U)_{ai,{}bj}=\frac{n}{|G|}\sum_{g\in G}\overline{\vartheta
_{ai}(g)}\psi _{bj}(g)\equiv A_{ai,{}bj},\qquad \overline{U}\otimes U=A.
\ee
Now we have to extract the matrix $U$ from this equation.

This equation is, in fact, the matrix equation in $\operatorname{M}(n^2,\mathbb{C})$ and on $\operatorname{LHS}$ we have tensor product block structure where the blocs are
of the form
\be
\label{eq18}
\overline{u_{ab}}U=A_{ab}:U=(u_{ij}),{}A_{ab}=(A_{ab})_{ij}\in \operatorname{M}(n,\mathbb{C})
\ee
and if $\overline{u}_{ab}=r_{ab}\operatorname{e}^{\rmi\mu _{ab}}\neq 0$ then 
\be
\operatorname{e}^{\rmi\mu _{ab}}u_{ij}=\frac{1}{r_{ab}}\frac{n}{|G|}\sum_{g\in G}\overline{
\vartheta _{ai}(g)}\psi _{bj}(g), 
\ee
where, from Lemma~\ref{lemma:2} the matrix $U^{\prime }=\operatorname{e}^{\rmi\mu _{ab}}U$ also gives the
similarity transformation that we are looking for. Thus in order to get the
explicit formula for a unitary matrix connecting $\vartheta $ and $\psi $
by the similarity transformation we have to know for which indices $(a,b)$
the weight $r_{ab}$ is not equal to zero. From the equation~\eqref{eq18} we get 
\be
r_{ab}\sqrt{n}=||A_{ab}||_{\tr},\qquad ||A_{ab}||_{\tr}=\sqrt{%
\tr(A_{ab}^{\dagger}A_{ab})}, \qquad \text{and} \qquad ||U||_{\tr}=\sqrt{n}.
\ee
which obviously shows that if $\overline{u}_{ab}=0$ then the corresponding
block $A_{ab}$ in $A$ is a zero matrix. On the other hand direct calculation gives 
\be
||A_{ab}||_{\tr}=\frac{n}{\sqrt{|G|}}\left( \sum_{g\in G}\vartheta _{aa}(g)\psi
_{bb}(g^{-1})\right) ^{\frac{1}{2}} 
\ee
therefore 
\be
r_{ab}=\sqrt{\frac{n}{|G|}}\left( \sum_{g\in G}\vartheta _{aa}(g)\psi
_{bb}(g^{-1})\right) ^{\frac{1}{2}}. 
\ee
The weight $r_{ab}$, as a function of indices $(a,b),$ indicates which
elements $u_{ab}$ of the matrix $U$ are non-zero and consequently
which blocks $A_{ab}\in \operatorname{M}(n,\mathbb{C})$ in the block
matrix $A$ are non-zero.

Summarizing we get

\begin{theorem}
\label{main:thm}
Suppose that $\vartheta $ and $\psi $ are two different but equivalent unitary
and irreducible matrix representations, in $\operatorname{M}(n,\mathbb{C}),$ of a finite group $G.$ Then
\begin{enumerate}[1)]
\item there exist indices $a,b=1,\ldots,n=\dim \psi$ such that 
\be
\sum_{g\in G}\vartheta _{aa}(g)\psi _{bb}(g^{-1})>0,
\ee

\item the matrix $U=(u_{ij})$ that determines the similarity transformation 
\be
\label{item2}
U^{\dagger}\vartheta (g)U=\psi (g),\quad \forall g\in G, 
\ee
has the following form
\be
\label{thm11}
u_{ij}\equiv u_{ij}(ab)=\frac{1}{r_{ab}}\frac{n}{|G|}\sum_{g\in G}\overline{
\vartheta _{ai}(g)}\psi _{bj}(g),
\ee
where 
\be
\label{thm12}
r_{ab}=\sqrt{\frac{n}{|G|}}\left( \sum_{g\in G}\vartheta _{aa}(g)\psi
_{bb}(g^{-1})\right) ^{\frac{1}{2}} 
\ee
and the $(a,b)$ are chosen in such a way that $r_{ab}>0$, which is possible
from the statement 1).
\end{enumerate}
\end{theorem}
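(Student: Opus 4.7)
The plan is to turn the defining equation $U^{\dagger}\vartheta(g)U = \psi(g)$ into an equation where the unknown $U$ and the known data $\vartheta,\psi$ are separated, and then extract $U$ using the tensor block structure. First I would rewrite the equation as $\vartheta(g)U = U\psi(g)$ and take matrix elements with indices $(b,j)$, getting $\sum_{st}\bar u_{sb} u_{tj}\,\vartheta_{st}(g) = \psi_{bj}(g)$, which I would reinterpret as a contraction of $\vartheta_{st}(g)$ against the tensor $\bar U\otimes U$ (with the index identification already used in the preceding display). This identity holds for every $g\in G$, so multiplying by $\vartheta_{ia}(g^{-1})$ and summing over $g$ invites the orthogonality relations of Proposition~\ref{prop44}. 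On the left this collapses the sum over $(s,t)$ via $\delta_{si}\delta_{ta}$, yielding $(\bar U\otimes U)_{ai,bj}=\frac{n}{|G|}\sum_{g\in G}\overline{\vartheta_{ai}(g)}\,\psi_{bj}(g)\equiv A_{ai,bj}$, which is exactly the matrix identity $\bar U\otimes U = A$ with $U$ and $(\vartheta,\psi)$ now cleanly separated.

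Next I would extract $U$ from this tensor identity. Using $(\bar U\otimes U)_{ai,bj}=\bar u_{ab}\,u_{ij}$, the $n^2\times n^2$ identity splits into $n^2$ block equations $\bar u_{ab} U = A_{ab}$, where $A_{ab}\in\operatorname{M}(n,\mathbb{C})$ is the $(a,b)$-block of $A$. For any pair $(a,b)$ with $\bar u_{ab}\neq 0$ I can write $\bar u_{ab}=r_{ab}\operatorname{e}^{\rmi\mu_{ab}}$ with $r_{ab}>0$, divide, and absorb the phase $\operatorname{e}^{\rmi\mu_{ab}}$ into $U$ (allowed by Lemma~\ref{lemma:2}), giving formula~\eqref{thm11}. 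The scalar $r_{ab}$ is then fixed by taking the trace norm of both sides of $\bar u_{ab}U=A_{ab}$: since $U$ is unitary, $\|U\|_{\tr}=\sqrt{n}$, hence $r_{ab}\sqrt{n}=\|A_{ab}\|_{\tr}$. A direct calculation of $\tr(A_{ab}^{\dagger}A_{ab})$ using the orthogonality relations once more reduces the double sum over $G\times G$ to a single sum, producing $r_{ab}=\sqrt{n/|G|}\bigl(\sum_{g}\vartheta_{aa}(g)\psi_{bb}(g^{-1})\bigr)^{1/2}$, which is~\eqref{thm12}.

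For part 1) I would argue by contradiction. If $r_{ab}=0$ for every pair $(a,b)$, then by the block equation $\bar u_{ab}U=A_{ab}$ we would have $u_{ab}=0$ for all $a,b$, so $U=0$, contradicting unitarity (and also contradicting Theorem~\ref{thm:MM}, which guarantees a one-dimensional space of intertwiners). Equivalently, since the quantities under the square root are nonnegative (they equal $|A_{ab}|$-squared Frobenius norms divided by a positive constant), the statement reduces to showing that not all $A_{ab}$ can vanish, which again follows because $A=\bar U\otimes U\neq 0$.

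The main technical step is the trace-norm evaluation $\|A_{ab}\|_{\tr}^{2}=\tfrac{n^2}{|G|}\sum_{g}\vartheta_{aa}(g)\psi_{bb}(g^{-1})$, which requires applying the orthogonality relation of Proposition~\ref{prop44} a second time to collapse the sum $\sum_{g,g'}\overline{\vartheta_{ai}(g)}\psi_{bj}(g)\vartheta_{ai}(g')\overline{\psi_{bj}(g')}$ into a single-group sum; the rest of the argument is bookkeeping of indices and an appeal to the phase freedom of Lemma~\ref{lemma:2}. I do not expect any genuine obstacle beyond carefully tracking the conventions on $(\bar U\otimes U)_{ai,bj}=\bar u_{ab}u_{ij}$, since the orthogonality relations do all the heavy lifting once the equation is brought into the form $\bar U\otimes U=A$.
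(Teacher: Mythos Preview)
Your proposal is correct and follows the paper's argument essentially step for step: rewrite $U^{\dagger}\vartheta(g)U=\psi(g)$ in tensor form, apply the orthogonality relations of Proposition~\ref{prop44} to isolate $\bar U\otimes U=A$, read off the block equations $\bar u_{ab}U=A_{ab}$, compute $r_{ab}$ via the trace norm, and absorb the phase using Lemma~\ref{lemma:2}. One small remark: the evaluation of $\|A_{ab}\|_{\tr}^{2}$ in the paper is done by unitarity of $\vartheta,\psi$ together with a change of summation variable $h=gg'^{-1}$ rather than a second invocation of Proposition~\ref{prop44} (which does not apply directly since $\vartheta$ and $\psi$ are equivalent but not equal), but the outcome and the overall strategy are identical.
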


\begin{remark}
In the point 1) of Theorem~\ref{main:thm} there are maximally $n$ equations that we need to check to find a non-zero factor $r_{ab}$. The problem of finding non-zero coefficient $r_{ab}$ can be realized in the polynomial time.
\end{remark}

\begin{remark}
If the representations $\vartheta$ and $\psi$ are orthogonal then the matrix $U$ is also orthogonal.
\end{remark}

\begin{remark}
For a fixed values $a,b=1,\ldots,n$ the unitary matrix $U=U(ab)$ in equation~\eqref{thm11} in Theorem~\ref{main:thm} is determined in a unique way by irreps $\psi, \vartheta$. From Lemma~\ref{lemma:2} it follows that for arbitrary $\alpha \in \mathbb{R}$ the unitary matrix $U'=\operatorname{e}^{\rmi \alpha}U(ab)$ also gives the similarity transformation equation~\ref{item2}.
\end{remark}

\begin{remark}
From the Eq.~\eqref{thm11} and the orthogonality relations (Proposition~\ref{prop44}) it follows that if the
irreducible representations $\vartheta $ and $\psi $ were not equivalent then $%
U=0,$ in agreement with Schur Lemma.
\end{remark}

\begin{remark}
\label{stoch}
From the unitarity of the matrix $U=(u_{ab})$ it follows that
\be
\forall a=1,\ldots,n \quad \sum_b r_{ab}^2=1, \quad \forall b=1,\ldots,n \quad \sum_{a} r_{ab}^2=1,
\ee
so the matrix with elements $r_{ab}^2$ is double stochastic.
\end{remark}

From Theorem~\ref{main:thm} in particular from equation~\eqref{thm11} one can deduce the following corollary which is a generalization of the classical orthogonality relation for irreps of finite group $G$ given in Proposition~\ref{prop44}, which plays very important role in the theory of group representation.

\begin{corollary}
\label{ortRel}
Let $\psi$, $\vartheta$ be unitary, different but equivalent irreps of $G$. Then there exists an unitary matrix $U=(u_{ab}) \ : \ r_{ab}=|u_{ab}|$ such that
\be
\label{llll}
r_{ab}u_{ij}=\frac{n}{|G|}\sum_{g \in G} \vartheta_{ia}(g^{-1})\psi_{bj}(g).
\ee
In particular, when $\vartheta=\psi$ then $U=\text{\noindent\(\mathds{1}\)}$ and thus formula~\eqref{llll} takes the form of classical orthogonality relation for irrep $\vartheta$
\be
\frac{n}{|G|}\sum_{g\in G} \vartheta_{ia}(g^{-1})\vartheta_{bj}(g)=\delta_{ab}\delta_{ij}.
\ee
\end{corollary}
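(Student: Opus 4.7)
The plan is to obtain \eqref{llll} as a short rearrangement of Theorem~\ref{main:thm}, and then to handle the degenerate case $\vartheta=\psi$ separately by Schur's Lemma.

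First I would apply point~1 of Theorem~\ref{main:thm} to select an index pair $(a,b)$ with $r_{ab}>0$, and take $U=U(ab)$ to be the unitary given by \eqref{thm11}. Multiplying \eqref{thm11} through by $r_{ab}$ yields
\[
r_{ab}\,u_{ij}=\frac{n}{|G|}\sum_{g\in G}\overline{\vartheta_{ai}(g)}\,\psi_{bj}(g).
\]
Since $\vartheta$ is unitary one has $\vartheta(g^{-1})=\vartheta(g)^{\dagger}$, hence $\overline{\vartheta_{ai}(g)}=\vartheta_{ia}(g^{-1})$, and the right-hand side becomes exactly the one in \eqref{llll}. The equality $r_{ab}=|u_{ab}|$ asserted in the statement follows by setting $(i,j)=(a,b)$ in \eqref{thm11} and comparing with \eqref{thm12}: one finds $u_{ab}=r_{ab}^{-1}\cdot r_{ab}^{2}=r_{ab}\geq 0$.

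Next I would verify that \eqref{llll} continues to hold for those index pairs $(a,b)$ with $r_{ab}=0$. The LHS is trivially zero, and for the RHS I would recall from the derivation preceding Theorem~\ref{main:thm} that the tensor-product identity $\overline{U}\otimes U=A$ identifies the block $A_{ab}$ with $\overline{u_{ab}}\,U$, where $\|A_{ab}\|_{\tr}=r_{ab}\sqrt{n}$. Hence $r_{ab}=0$ forces $A_{ab}=0$, i.e.\ $\sum_{g}\overline{\vartheta_{ai}(g)}\psi_{bj}(g)=0$ for all $i,j$, so the RHS vanishes as well.

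Finally, the limiting case $\vartheta=\psi$ is not covered by Theorem~\ref{main:thm}; here I would invoke Schur's Lemma directly, which forces any intertwiner to be a scalar multiple of $\mathds{1}$. The unit-norm gauge choice $U=\mathds{1}$ gives $u_{ij}=\delta_{ij}$ and $r_{ab}=\delta_{ab}$, so \eqref{llll} reduces to $\delta_{ab}\delta_{ij}=\tfrac{n}{|G|}\sum_{g}\vartheta_{ia}(g^{-1})\vartheta_{bj}(g)$, exactly the classical Schur orthogonality of Proposition~\ref{prop44}. There is no serious obstacle here, since the corollary is essentially a repackaging of Theorem~\ref{main:thm}; the only subtle point is the gauge dependence of $U$, which is why the statement must pin down the specific unitary $U=U(ab)$ produced by the construction.
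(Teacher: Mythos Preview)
Your approach is correct and matches the paper's own treatment: the paper does not give a separate proof of the corollary but simply states that it is deduced from Theorem~\ref{main:thm}, in particular from equation~\eqref{thm11}, and you carry out exactly that deduction (rearranging \eqref{thm11} using unitarity of $\vartheta$, checking $r_{ab}=|u_{ab}|$ via \eqref{thm12}, handling the $r_{ab}=0$ case through the block identity $\overline{u_{ab}}\,U=A_{ab}$, and treating $\vartheta=\psi$ by Schur). Your write-up is in fact more explicit than the paper's, which offers no argument beyond the pointer to \eqref{thm11}.
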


\section{Examples regarding symmetric group}
\label{symm}
 In this section we show a few examples of unitary matrices by application of the Theorem~\ref{main:thm} (up to global phase) for the symmetric group $S(N)$ for some small $N$. We start from the simplest examples for $S(3)$.
\begin{example}
\label{exx1}
Consider two different but equivalent representations of the group $S(3)$ 
\be
\psi ^{\varepsilon }(12)=\left( 
\begin{array}{cc}
0 & 1 \\ 
1 & 0%
\end{array}%
\right) ,\quad \psi ^{\varepsilon }(13)=\left( 
\begin{array}{cc}
0 & \varepsilon  \\ 
\varepsilon ^{-1} & 0%
\end{array}%
\right) ,\quad \psi ^{\varepsilon }(23)=\left( 
\begin{array}{cc}
0 & \varepsilon ^{-1} \\ 
\varepsilon  & 0%
\end{array}%
\right) ,\quad 
\ee
\be
\psi ^{\varepsilon }(123)=\left( 
\begin{array}{cc}
\varepsilon  & 0 \\ 
0 & \varepsilon ^{-1}%
\end{array}%
\right) ,\quad \psi ^{\varepsilon }(132)=\left( 
\begin{array}{cc}
\varepsilon ^{-1} & 0 \\ 
0 & \varepsilon 
\end{array}%
\right) ,\quad 
\ee
where $\varepsilon ^{3}=1,$ and 
\be
\varphi (12)=\left( 
\begin{array}{cc}
1 & 0 \\ 
0 & -1%
\end{array}%
\right) ,\quad \varphi (13)=\left( 
\begin{array}{cc}
-\frac{1}{2} & -\frac{\sqrt{3}}{2} \\ 
-\frac{\sqrt{3}}{2} & \frac{1}{2}%
\end{array}%
\right) ,\quad \varphi (23)=\left( 
\begin{array}{cc}
-\frac{1}{2} & \frac{\sqrt{3}}{2} \\ 
\frac{\sqrt{3}}{2} & \frac{1}{2}%
\end{array}%
\right) ,\quad 
\ee
\be
\varphi (123)=\left( 
\begin{array}{cc}
-\frac{1}{2} & \frac{\sqrt{3}}{2} \\ 
-\frac{\sqrt{3}}{2} & -\frac{1}{2}%
\end{array}%
\right) ,\quad \varphi (132)=\left( 
\begin{array}{cc}
-\frac{1}{2} & -\frac{\sqrt{3}}{2} \\ 
\frac{\sqrt{3}}{2} & -\frac{1}{2}%
\end{array}%
\right) .\quad 
\ee
Applying the theorem we get 
\be
r_{11}=\sqrt{\frac{n}{|G|}}\left( \sum_{g\in S(3)}\varphi _{11}(g)\psi
_{11}(g^{-1})\right) ^{\frac{1}{2}}=\frac{1}{\sqrt{2}},
\ee
and
\be
U=\frac{1}{\sqrt{2}}\left( 
\begin{array}{cc}
1 & 1 \\ 
\frac{1}{\sqrt{3}}(\varepsilon -\overline{\varepsilon }) & \frac{-1}{\sqrt{3}%
}(\varepsilon -\overline{\varepsilon })%
\end{array}%
\right) .
\ee
\end{example}

\begin{example}
\label{exx2}
It is clear that the representations $\psi ^{\varepsilon }$ and $\varphi
=\psi ^{\overline{\varepsilon }}$ are equivalent. In this case the theorem
gives 
\be
r_{11}=0,\qquad r_{12}=1, 
\ee
and 
\be
U=\left( 
\begin{array}{cc}
0 & 1 \\ 
1 & 0%
\end{array}%
\right) , 
\ee
which is obvious without applying the theorem.
\end{example}

\begin{example}
\label{exx3}
It is also known that for $S(3)$ the representations $\psi ^{\varepsilon }$
and $\varphi =\mathbf{sgn}\psi ^{\varepsilon }$ are equivalent~\footnote{Here we use the following convention. Namely by $\mathbf{sgn}$ we understand so called signum representation for the group $S(n)$. By $\sgn(\sigma)$ for some $\sigma \in S(n)$ we understand parity of the permutation $\sgn : S(n) \rightarrow \{-1,1\}$, which is defined as follows 
\[
\forall \sigma \in S(n) \ \sgn(\sigma)=\left( -1\right)^{N(\sigma)},
\]
where $N(\sigma)$ is the number of inversion in $\sigma$.}. again applying the
theorem we get%
\be
r_{11}=1 
\ee
and 
\be
U=\left( 
\begin{array}{cc}
1 & 0 \\ 
0 & -1%
\end{array}%
\right) . 
\ee
\end{example}

Now we present a few examples of unitary matrices from Lemma~\ref{problem:1} for irreducible representations of symmetric groups $S(N)$ for some small $N$ using directly formulas~\eqref{thm11} and~\eqref{thm12} from Theorem~\ref{main:thm}. These matrices map conjugated irreps calculated in Young-Yamanouchi basis which we describe further in this section. To obtain  this results we wrote code in \textit{Mathematica 7} and examples~\ref{exx4},~\ref{exx5} and~\ref{exx6} are calculated using Young-Yamanouchi formalism (here we refer reader to the~\cite{Procesi,Fulton,RWallach,JQChen} or further part of this paper).
\begin{example}
\label{exx4}
In this example we present unitary transformations between conjugated Young-Yamanouchi irreps for the symmetric group $S(4)$. We restrict our attention to partitions $\lambda_1=(3,1)$ and $\lambda_2=(2,2)$, so it means that our unitary matrices transform irreps on partitions $\lambda_i$ to irreps on $\sgn \lambda_i^{t}$, where $i=1,2$. We will use this convention also in next example.
\be
U_{\lambda_1}=\begin{pmatrix} {} & {} & -1\\ {} & 1 & {}\\ -1 & {} & {} \end{pmatrix}, \quad  U_{\lambda_2}=\begin{pmatrix}{} & -1\\ 1 & {} \end{pmatrix}
\ee
\end{example}

\begin{example}
\label{exx5}
In this example we present unitary transformations between conjugated Young-Yamanouchi irreps for the symmetric group $S(5)$. We restrict our attention to partitions $\lambda_1=(4,1)$, $\lambda_2=(3,2)$ and $\lambda_3=(3,1,1)$.
\be
U_{\lambda_1}=\begin{pmatrix} {} & {} & {} & -1\\ {} & {} & 1 & {}\\ {} & -1 & {} & {}\\ 1 & {} & {} & {} \end{pmatrix}, \quad U_{\lambda_2}=\begin{pmatrix}{} & {} & {} & {} & -1\\ {} & {} & {} & 1 & {}\\ {} & {} & 1 & {} & {}\\ {} & -1 & {} & {} & {}\\ 1 & {} & {} & {} & {} \end{pmatrix}, \quad U_{\lambda_3}=\begin{pmatrix} {} & {} & {} & {} & {} & -1\\ {} & {} & {} & {} & 1 & {}\\ {} & {} & {} & -1 & {} & {}\\
{} & {} & -1 & {} & {} & {}\\ {} & 1 & {} & {} & {} & {}\\ -1 & {} & {} & {} & {} & {}\end{pmatrix}.
\ee
\end{example}

\begin{example}
\label{exx6}
In this example we present unitary transformations between conjugated Young-Yamanouchi irreps for the symmetric group $S(6)$. We restrict our attention to partitions $\lambda_1=(5,1)$, $\lambda_2=(4,2)$, $\lambda_3=(4,2)$, $\lambda_4=(4,1,1)$, $\lambda_5=(3,3)$ and $\lambda_6=(3,2,1)$.
\be
U_{\lambda_1}=\begin{pmatrix} {} & {} & {} & {} & -1 \\ {} & {} & {} & 1 & {} \\ {} & {} & -1 & {} & {} \\ {} & 1 & {} & {} & {} \\ -1 & {} & {} & {} & {} \end{pmatrix}, \quad U_{\lambda_2}=\begin{pmatrix} {} & {} & {} & {} & {} & {} & {} & {} & -1 \\ {} & {} & {} & {} & {} & {} & {} & 1 & {} \\
{} & {} & {} & {} & {} & {} & 1 & {} & {}\\ {} & {} & {} & {} & {} & -1 & {} & {} & {}\\ {} & {} & {} & {} & 1 & {} & {} & {} & {} \\ {} & {} & {} & -1 & {} & {} & {} & {} & {} \\ {} & {} & 1 & {} & {} & {} & {} & {} & {}\\ {} & -1 & {} & {} & {} & {} & {} & {} & {}\\ 1 & {} & {} & {} & {} & {} & {} & {} & {}\end{pmatrix}
\ee
\be
U_{\lambda_3}=\begin{pmatrix} {} & {} & {} & {} & {} & {} & {} & {} & {} & -1 \\ {} & {} & {} & {} & {} & {} & {} & {} & 1 & {} \\ {} & {} & {} & {} & {} & {} & {} & -1 & {} & {} \\ {} & {} & {} & {} & {} & {} & -1 & {} & {} & {} \\ {} & {} & {} & {} & {} & 1 & {} & {} & {} & {} \\ {} & {} & {} & {} & -1& {} & {} & {} & {} & {} \\ {} & {} & {} & 1 & {} & {} & {} & {} & {} & {} \\ {} & {} & -1 & {} & {} & {} & {} & {} & {} & {} \\ {} & 1 & {} & {} & {} & {} & {} & {} & {} & {} \\ -1 & {} & {} & {} & {} & {} & {} & {} & {} & {} \\ \end{pmatrix}, \quad U_{\lambda_5}=\begin{pmatrix} {} & {} & {} & {} & -1  \\ {} & {} & {} & 1 & {} \\ {} & {} & 1 & {} & {} \\ {} & -1 & {} & {} & {} \\ 1 & {} & {} & {} & {} \\\end{pmatrix}
\ee
and finally
\be
U_{\lambda_6}=\begin{pmatrix}{} & {} & {} & {} & {} & {} & {} & {} & {} & {} & {} & {} & {} & {} & {} & -1\\ {} & {} & {} & {} & {} & {} & {} & {} & {} & {} & {} & {} & {} & {} & 1 & {}\\ {} & {} & {} & {} & {} & {} & {} & {} & {} & {} & {} & {} & {} & -1 & {} & {}\\ {} & {} & {} & {} & {} & {} & {} & {} & {} & {} & {} & {} & -1 & {} & {} & {}\\ {} & {} & {} & {} & {} & {} & {} & {} & {} & {} & {} & 1 & {} & {} & {} & {}\\ {} & {} & {} & {} & {} & {} & {} & {} & {} & {} & 1 & {} & {} & {} & {} & {}\\ {} & {} & {} & {} & {} & {} & {} & {} & {} & -1 & {} & {} & {} & {} & {} & {}\\ {} & {} & {} & {} & {} & {} & {} & {} & 1 & {} & {} & {} & {} & {} & {} & {}\\ {} & {} & {} & {} & {} & {} & {} & 1 & {} & {} & {} & {} & {} & {} & {} & {}\\ {} & {} & {} & {} & {} & {} & -1 & {} & {} & {} & {} & {} & {} & {} & {} & {}\\ {} & {} & {} & {} & {} & 1 & {} & {} & {} & {} & {} & {} & {} & {} & {} & {}\\ {} & {} & {} & {} & 1 & {} & {} & {} & {} & {} & {} & {} & {} & {} & {} & {}\\ {} & {} & {} & -1 & {} & {} & {} & {} & {} & {} & {} & {} & {} & {} & {} & {}\\ {} & {} & -1 & {} & {} & {} & {} & {} & {} & {} & {} & {} & {} & {} & {} & {}\\ {} & 1 & {} & {} & {} & {} & {} & {} & {} & {} & {} & {} & {} & {} & {} & {}\\ -1 & {} & {} & {} & {} & {} & {} & {} & {} & {} & {} & {} & {} & {} & {} & {}\end{pmatrix}
\ee
\end{example}
These examples calculated in Young-Yamanouchi basis suggest that all unitary matrices representing similarity relation between Young-Yamanouchi conjugated irreps have quite simply, anti-diagonal form with $\pm 1$ only. In the next section we proof this conjecture. As we mentioned in introduction our method from the Section~\ref{gen} can be applied to any finite group G for which we have characterization of its irreps. To obtain desired matrix elements of irreps one can use for example program GAP~\cite{GAP}, which returns a list of representatives of the irreducible matrix representations of $G$ over field $F$, up to equivalence.

\section{Analytical formula for similarity relation}
\label{symm2}
Main goal of this section is to prove that all unitary matrices which map Young-Yamanouchi conjugated irreps (labelled by $\lambda$ and $\lambda^t$, where $\lambda$ is partition of $N \in \mathbb{N}$, see Equation~\eqref{partitofN} below) have anti-diagonal form with $\pm 1$.
Namely we will show that 
unitary matrix $U$ which transforms irreducible representation $D^{\lambda}$ of $S(N)$ calculated in Young-Yamanouchi basis onto equivalent irreducible representation $\mathbf{sgn} D^{\lambda^t}$ have anti-diagonal form with $\pm 1$ only. In fact the matrix $U$ is of the form
\be
\label{formU}
U=\left( 
\begin{array}{cccccc}
0 & 0 & . & . & . & \sgn(\sigma_{1}) \\ 
0 &  &  &  & \sgn(\sigma_{2}) & 0 \\ 
. &  &  & . &  & . \\ 
. &  & . &  &  & . \\ 
0 & \sgn(\sigma_{d_{\lambda }-1}) &  &  &  & 0 \\ 
\sgn(\sigma_{d_{\lambda }}) & 0 & . & . & 0 & 0%
\end{array}%
\right),
\ee
where the permutations $\sigma_i$ are described in  Proposition~\ref{M11}, Equation~\eqref{M11eq2} below. We show also Proposition~\ref{MH} which states that our unitary transformation can be written as 
\be
U=\sum_{T_{\lambda}} \sgn(T_{\lambda})|T_{\lambda^t}\>\< T_{\lambda}|,
\ee
where $\sgn(T_{\lambda})=\sgn(\sigma)$ and $\sigma$ is the permutation that transforms arbitrary chosen, fixed $\operatorname{SYT}$ $T_{\lambda}^{\times}$ into $T_{\lambda}$ (see Prop.~\ref{M41} and Rem.~\ref{M41A}). We also argue that for different choices of $T_{\lambda}^{\times}$, the corresponding $U$ may differ by a global sign.

In the next part of this section we will prove the above statements. In order to do this firstly we have to introduce briefly the
concept of of irreducible representations of the group $S(N)$ based on the
concepts of natural Young representation and Yamanouchi symbols therefore we
call such a representations Young-Yamanouchi representations.

As it is known any irreducible representation of the group $S(N)$ is
uniquely determined by a partition $\lambda =(\lambda _{1},\lambda
_{2},\ldots,\lambda _{k}),$ where 
\be
\label{partitofN}
\lambda _{1}\geq \lambda _{2}\geq \ldots \geq \lambda _{k}\geq 0,\quad
\sum_{i=1}^{k}\lambda _{i}=n. 
\ee
To each partition $\lambda =(\lambda _{1},\lambda _{2},\ldots,\lambda _{k})$ is
associated a Young diagram $(\operatorname{YD})$ also called  Young frame (see Example~\ref{young1}),
with $\lambda _{i}$ boxes in the $i$-th row, the rows of boxes lined up on
the left. The conjugated partition $\lambda ^{t}=(\lambda _{1}^{t},\lambda
_{2}^{t},\ldots,\lambda _{l}^{t})$ to the partition $\lambda =(\lambda
_{1},\lambda _{2},\ldots,\lambda _{k})$ is defined by interchanging rows and
columns of the Young diagram $\lambda =(\lambda _{1},\lambda
_{2},\ldots,\lambda _{k}).$ For example if $\lambda =(3,2,2,1)$ then $\lambda
^{t}=(4,3,1).$ In general for an arbitrary partition $\lambda=\left(\lambda_1,\lambda_2,\ldots, \lambda_k\right)$ we can obtain conjugate partition $\lambda^t$ using formula
\begin{equation}
\left(\lambda_1, \lambda_2,\ldots, \lambda_k \right)^t=\left(k^{\lambda_k},(k-1)^{\lambda_{k-1}-\lambda_k},\ldots,2^{\lambda_2-\lambda_3},1^{\lambda_1-\lambda_2} \right),
\end{equation} 
where notation $j^m$ denotes that integer $j$ is to be repeated $m$ times with $m=0$ meaning no occurrence.

\begin{example}
\label{young1}
In this example we show explicitly all Young diagrams for $N=4$ together with corresponding partitions $\lambda$.
\begin{center}
\begin{tabular}{ccccccccc}
$\yng(1,1,1,1)$ & &$\yng(2,1,1)$ & &$\yng(2,2)$ & &$\yng(3,1)$ & &$\yng(4)$ \\
&&&&&&&&\\
$\lambda=(1,1,1,1)$, & &$\lambda=(2,1,1)$, & &$\lambda=(2,2)$, & &$\lambda=(3,1)$, & &$\lambda=(4)$
\end{tabular}
\end{center}
\end{example}

\begin{definition}
\label{M1}
A Young tableau $(\operatorname{YT})$ of partition $\lambda =(\lambda
_{1},\lambda _{2},\ldots,\lambda _{k}),$ is a Young diagram $\lambda $ in which
the boxes are fulfilled bijectively by numbers $\{1,2,\ldots,n\}.$ Young
tableau will be denoted $\overline{T}_{\lambda }=(\overline{t}%
_{ij}^{\lambda }),$ where $\overline{t}_{ij}^{\lambda }$ $\in \{1,2,\ldots,n\}$
denote the entry of $\overline{T}_{\lambda }$ in the position $(i,j).$ There are $n!$ of $\operatorname{YT}$.

A standard Young tableau $(\operatorname{SYT})$ is Young tableau where the numbers $%
\{1,2,\ldots,n\}$ appears in the rows of the tableau in the increasing to the
right sequences and in the columns of the tableau in the increasing
sequences from the top to downwards. $\operatorname{SYT}$ will be denoted $T_{\lambda
}=(t_{ij}^{\lambda }).$ The conjugated standard Young tableau $T_{\lambda
^{t}}^{t}$ to the standard Young tableau $T_{\lambda }$ is defined by
interchanging rows and columns (together with the numbers contained in
them) of the standard Young tableau $T_{\lambda }$. Thus the conjugated
standard Young tableau $T_{\lambda ^{t}}^{t}$ is a standard Young tableau
for the conjugated partition $\lambda ^{t}=(\lambda _{1}^{t},\lambda
_{2}^{t},\ldots,\lambda _{l}^{t}).$
\end{definition}


\begin{example}
\label{example1}
Here we present Young tableaux ($\operatorname{YT}$) and standard Young tableaux $(\operatorname{SYT})$ for $N=3$ and partition $\lambda=(2,1)$.
\begin{center}
\begin{tabular}{ccccccccc}
\text{All Young tableaux:\phantom{ppp}}&$\young(12,3) \ $ & $\young(13,2) \ $ & $\young(21,3) \ $ & $\young(23,1) \ $ & $\young(31,2) \ $ &  $\young(32,1)$\\
&&&&&&\\
\text{All standard Young tableaux:\phantom{ppp}}&$\young(12,3) \ $ & $\young(13,2)$ &&&
\end{tabular}
\end{center}

\end{example}

\bigskip

Now one can define, in a natural way, the action of the group $S(N)$ on the
set of all $\operatorname{YT}$.

\begin{definition}
\label{M4}
\be
\forall \sigma \in S(N)\quad \sigma (\overline{T}_{\lambda })=\sigma (%
\overline{t}_{ij}^{\lambda })\equiv (\sigma (\overline{t}_{ij}^{\lambda })) 
\ee
e.i. a permutation $\sigma \in S(N)$ acts on each entry of Young tableau $%
\overline{T}_{\lambda }$ .
\end{definition}

Note that this action of the group $S(N)$ is well defined on the set of all $%
\operatorname{YT}$ and it is not well defined on the subset of $\operatorname{SYT}$ because the action of $%
\sigma \in S(N)$ on standard Young tableau $T_{\lambda }$ may give a $\operatorname{YT}$
which is not a $\operatorname{SYT}$. For a given standard Young tableau $T_{\lambda }$ only
a particular permutations $\sigma \in S(N)$ are such that $\sigma
(T_{\lambda })$ is a $\operatorname{SYT}$.  From definition~\ref{M4} it follows that $S(N)$ acts on the set of $\operatorname{YT}$ transitively and moreover we have

\begin{proposition}
\label{M41}
Choosing $\operatorname{YT} \ T_{\lambda}^1$ which is in fact $\operatorname{SYT}$ with the canonical embedding~\footnote{By $\operatorname{SYT}$ with canonical row embedding we understand Young tableau of the shape $\lambda$ filled with numbers $1,\ldots,N$ in such a way, that starting from the left-top corner we put into first box $1$, then we put $2$ into second one on the right in the same row. We continue this procedure up to $N$. Reader can see Example~\ref{example1}, where all $\operatorname{SYTs}$ for $\lambda=(2,1)$ are presented (second row). The canonical embedding is presented by first $\operatorname{SYT}$ from the left. In the similar way we can define canonical column embedding.}, we establish a bijective correspondence between $S(N)$ and the set of $\operatorname{YT}$ given by following relation
\be
\label{S1}
\forall \overline{T}_{\lambda} \ \exists ! \sigma \in S(N) \ \sigma \overline{T}^1_{\lambda}=\overline{T}_{\lambda}.
\ee
In particular we have
\be
\label{S2}
\forall T_{\lambda} \ \exists ! \sigma \in S(N) \ \sigma T^1_{\lambda}=T_{\lambda},
\ee
where symbol $\exists !$ means "there exists unique".
\end{proposition}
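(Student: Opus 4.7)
The plan is to verify two elementary facts: (i) the set of Young tableaux of shape $\lambda$ has cardinality $N!$, and (ii) the orbit map $\Phi:S(N)\to\{\operatorname{YT}\text{ of shape }\lambda\}$ defined by $\Phi(\sigma)=\sigma T^1_{\lambda}$ is injective. Since $|S(N)|=N!$, these two facts together force $\Phi$ to be a bijection of finite sets of equal cardinality, which is precisely the content of~\eqref{S1}. Statement~\eqref{S2} then follows at once: any $\operatorname{SYT}$ $T_{\lambda}$ is in particular a $\operatorname{YT}$, so by the bijectivity of $\Phi$ there exists a unique $\sigma\in S(N)$ with $\sigma T^1_{\lambda}=T_{\lambda}$.

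For the counting fact (i), recall from Definition~\ref{M1} that a $\operatorname{YT}$ of shape $\lambda$ is by definition a bijective filling of the $N$ boxes of the diagram by the symbols $\{1,\ldots,N\}$. Fixing any enumeration of the boxes identifies such fillings with bijections of $\{1,\ldots,N\}$, yielding exactly $N!$ tableaux, as already remarked in Definition~\ref{M1}.

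For injectivity (ii), write $T^1_{\lambda}=(t^1_{ij})$ and suppose $\sigma T^1_{\lambda}=\tau T^1_{\lambda}$ for some $\sigma,\tau\in S(N)$. By Definition~\ref{M4}, this equality means $\sigma(t^1_{ij})=\tau(t^1_{ij})$ for every box $(i,j)$ of $\lambda$. Since the collection $\{t^1_{ij}\}$ covers $\{1,\ldots,N\}$ exactly once (the filling being bijective), the permutations $\sigma$ and $\tau$ agree on the whole of $\{1,\ldots,N\}$, hence $\sigma=\tau$. This proves injectivity and closes the argument.

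I do not foresee any real obstacle here: the proposition is essentially the statement that the $S(N)$-action on tableaux given in Definition~\ref{M4} is both free (the argument above) and transitive (forced by the cardinality match). The specific choice of $T^1_{\lambda}$ as the canonically row-embedded $\operatorname{SYT}$ plays no role in the proof itself; any fixed reference tableau would do. The canonical embedding is singled out only because it provides a convenient basepoint, to be used in the next subsections to attach a well-defined sign $\sgn(T_{\lambda})=\sgn(\sigma)$ to each $\operatorname{SYT}$ and ultimately to cast $U$ in the antidiagonal form~\eqref{formU}.
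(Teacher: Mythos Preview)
Your proof is correct and matches the paper's treatment: the paper does not spell out a proof of Proposition~\ref{M41} at all, merely noting before the statement that transitivity of the $S(N)$-action on $\operatorname{YT}$ follows from Definition~\ref{M4}, and leaving the rest implicit. Your argument via injectivity plus the cardinality count $|\{\operatorname{YT}\}|=N!=|S(N)|$ is exactly the elementary verification that fills this in, and your closing remark that the particular choice of $T_{\lambda}^{1}$ is immaterial to the bijection itself anticipates Remark~\ref{M41A}.
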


\begin{remark}
\label{M41A}
Obviously, in general, for a given $\overline{T}_{\lambda }$ (or $T_{\lambda
}$) the permutation $\sigma $ in the eq.~\eqref{S1},~\eqref{S2} depends on the choice of $%
T_{\lambda }^{1}$, but as we will see this dependence in not important for
the properties of the matrix $U$.
\end{remark}

In our further considerations the $\operatorname{SYT}$ will be the most important because,
as we will see, they will label the bases of the Young-Yamanouchi irreducible
representations of the symmetric group $S(N).$

\bigskip In the Young-Yamanouchi  irreducible representations of the group $%
S(N)$ the concept of axial distance play important role.

\begin{definition}
\label{M5}
The axial distance $\rho (T_{\lambda };i,j)$ between the boxes $%
i,j$ in the standard Young tableau $T_{\lambda }$ is the number of
horizontal or vertical steps to get from $i$ to $j.$ Each step is counted $%
+1 $ if it goes down or to the left and its counts $-1$ if it goes up or to
the right.
\end{definition}

\bigskip The axial distance has the following properties which follows
directly from its definition

\begin{proposition}
\label{M6}
\be
\rho (T_{\lambda };i,j)=-\rho (T_{\lambda };j,i),\qquad \rho (T_{\lambda
^{t}}^{t};i,j)=-\rho (T_{\lambda };i,j). 
\ee
\end{proposition}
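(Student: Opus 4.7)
The plan is to reduce both identities to a single coordinate formula for the axial distance. Let me write the row and column indices of the box containing the entry $k$ in $T_\lambda$ as $r_k$ and $c_k$, with $r$ increasing downward and $c$ increasing rightward, matching the sign convention of Definition~\ref{M5}. The first step would be to verify that for any lattice path of unit horizontal and vertical steps from the box of $i$ to the box of $j$ inside the diagram, the signed step-count collapses to
\[
\rho(T_\lambda;i,j) = (r_j - r_i) + (c_i - c_j),
\]
because a down step contributes $+1 = \Delta r$, an up step contributes $-1 = \Delta r$, a left step contributes $+1 = -\Delta c$, and a right step contributes $-1 = -\Delta c$; summing along the path telescopes to the displayed expression. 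In particular the step-count is path-independent, which is the only point in the argument that really has to be checked.

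Given the formula, both identities become one-line manipulations. Swapping $i$ and $j$ on the right-hand side flips the sign of each summand, which proves $\rho(T_\lambda;j,i) = -\rho(T_\lambda;i,j)$. For the second identity I would use the fact that the conjugated tableau $T_{\lambda^t}^t$ is obtained by interchanging rows and columns of $T_\lambda$, so if the entry $k$ sits at position $(r_k,c_k)$ in $T_\lambda$, then it sits at $(c_k,r_k)$ in $T_{\lambda^t}^t$. Substituting into the coordinate formula gives
\[
\rho(T_{\lambda^t}^t;i,j) = (c_j - c_i) + (r_i - r_j) = -\rho(T_\lambda;i,j),
\]
as claimed.

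I do not anticipate any genuine obstacle here: once the step-counting definition is rephrased as a signed coordinate difference, the whole proposition is elementary bookkeeping. The part that deserves the most care is the path-independence of the signed step-count, which I would justify either by the telescoping observation above or, if one prefers, by a short induction on the number of steps (each elementary move changes exactly one of the two coordinates by $\pm 1$ in agreement with the sign rule, so appending or deleting a step alters the count in a way consistent with the endpoint formula).
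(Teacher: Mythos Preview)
Your argument is correct. The paper offers no proof beyond the remark that the identities ``follow directly from its definition,'' so your coordinate formula $\rho(T_\lambda;i,j)=(r_j-r_i)+(c_i-c_j)$ simply makes explicit what the paper leaves to the reader; the two approaches coincide in spirit, with yours being the more careful version.
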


The $\operatorname{SYT}$ can be characterized in simple way by so called Yamanouchi symbols
in the following way.

\begin{definition}
\label{M7}
For any standard Young tableau $T_{\lambda }$ we define a row
Yamanouchi symbol $(\operatorname{RYS}$) as a row of $n$ numbers%
\be
M_{\lambda }=(M_{1}(\lambda ),M_{2}(\lambda ),\ldots,M_{n}(\lambda )) 
\ee
where $M_{i}(\lambda )$ is the number of the row in the standard Young
tableau $T_{\lambda }$ in which the number $i$ is contained. Similarly a
column Yamanouchi symbol $(\operatorname{CYS})$ is defined also as a row of $n$ numbers 
\be
N_{\lambda }=(N_{1}(\lambda ),N_{2}(\lambda ),\ldots,N_{1}(\lambda )) 
\ee
where now $N_{i}(\lambda )$ is a number of column in $T_{\lambda }$ in wich
the number $i$ appears.
\end{definition}

From the definitions of the standard Young tableau and of the Yamanouchi
symbols it follows that for a given Young diagram $\lambda $ the row
Yamanouchi symbol $M_{\lambda }$ characterizes uniquely the corresponding
standard Young tableau $T_{\lambda }.$ Similarly we have a bijective
correspondence between the column Yamanouchi symbols $N_{\lambda }$ and the
standard Young tableau $T_{\lambda }.$ Both symbols $M_{\lambda }$ and $%
N_{\lambda }$ characterize in a unique way the corresponding Young diagram
and the standard Young tableau and in the notation of Definition~\ref{M1}. we have 
\be
T_{\lambda }=(t_{M_{t}(\lambda )N_{t}(\lambda )}^{\lambda }).
\ee

Directly from the definition of $\operatorname{RYS}$ and $\operatorname{CYS}$ we get

\begin{proposition}
\label{M8}
 Let $T_{\lambda }$ be a $\operatorname{SYT}$, $M_{\lambda }=(M_{1}(\lambda
),M_{2}(\lambda ),\ldots,M_{n}(\lambda ))$ and $N_{\lambda }=(N_{1}(\lambda
),N_{2}(\lambda ),\ldots,N_{1}(\lambda ))$ the corresponding $\operatorname{RYS}$ and $\operatorname{CYS}$
respectively. For the conjugated standard Young tableau $T_{\lambda ^{t}}^{t}
$ we denote by $M_{\lambda ^{t}}^{t}=(M_{1}(\lambda ^{t}),M_{2}(\lambda
^{t}),\ldots,M_{n}(\lambda ^{t}))$ and $N_{\lambda ^{t}}^{t}=(N_{1}(\lambda
^{t}),N_{2}(\lambda ^{t}),\ldots,N_{1}(\lambda ^{t}))$ the corresponding $\operatorname{RYS}$
and $\operatorname{CYS}$. Then we have 
\be
M_{\lambda ^{t}}^{t}=N_{\lambda },\qquad N_{\lambda ^{t}}^{t}=M_{\lambda },
\ee
e.i. the $\operatorname{RYS}$ (respectively $\operatorname{CYS}$) for the conjugated standard Young
tableau $T_{\lambda ^{t}}^{t}$ its equal to $\operatorname{CYS}$ (respectively $\operatorname{RYS}$) of $%
T_{\lambda }$ .
\end{proposition}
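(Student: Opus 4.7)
The plan is to unwind the definitions; the statement is essentially a bookkeeping identity about what conjugation of a standard Young tableau does to the row/column coordinates of each entry.

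First I would make explicit the geometric content of the conjugation operation $T_\lambda \mapsto T_{\lambda^t}^t$. By Definition~\ref{M1}, the conjugate tableau is obtained by interchanging rows and columns of $T_\lambda$ together with the numbers they contain. Concretely, this means that if $T_\lambda = (t^\lambda_{ij})$, then $T_{\lambda^t}^t = (t^{\lambda^t}_{ji})$ with $t^{\lambda^t}_{ji} = t^\lambda_{ij}$, i.e.\ the box at position $(i,j)$ of $T_\lambda$ is reflected across the main diagonal to position $(j,i)$ of $T_{\lambda^t}^t$, carrying its entry with it.

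Next I would apply this reflection to each entry $k \in \{1,\ldots,n\}$. By Definition~\ref{M7}, $M_k(\lambda)$ is the row index of the unique box of $T_\lambda$ containing $k$, and $N_k(\lambda)$ is its column index, so this box sits at position $(M_k(\lambda), N_k(\lambda))$ in $T_\lambda$. After conjugation, that very same entry $k$ occupies position $(N_k(\lambda), M_k(\lambda))$ in $T_{\lambda^t}^t$. Reading off its row and column coordinates in $T_{\lambda^t}^t$ according to Definition~\ref{M7} applied to $\lambda^t$, one gets
\be
M_k(\lambda^t) = N_k(\lambda), \qquad N_k(\lambda^t) = M_k(\lambda).
\ee
Since this holds for every $k=1,\ldots,n$, it assembles into the equality of full Yamanouchi symbols $M_{\lambda^t}^t = N_\lambda$ and $N_{\lambda^t}^t = M_\lambda$ claimed in the proposition.

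There is no genuine obstacle here: the only thing to be careful about is the bookkeeping of which label refers to the original tableau versus the conjugated one. A one-line sanity check on a small example (for instance $\lambda=(2,1)$ with the two SYT listed in Example~\ref{example1}) confirms that the row symbol of the conjugated tableau reproduces the column symbol of the original, so the identification is consistent with the conventions fixed in Definitions~\ref{M1} and~\ref{M7}.
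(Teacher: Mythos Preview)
Your argument is correct and is exactly the ``directly from the definition of $\operatorname{RYS}$ and $\operatorname{CYS}$'' that the paper invokes without further detail: conjugation swaps the row and column coordinates of each entry, so the row symbol of $T_{\lambda^t}^t$ is the column symbol of $T_\lambda$ and vice versa. There is nothing to add or correct.
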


\bigskip The advantage of description of $\operatorname{SYT}$ in terms of Yamanouchi
symbols is that one can easily introduce the linear (lexicographic) ordering in the set of
all $\operatorname{RYS}$ (respectively $\operatorname{CYS})$ symbols for a given Young diagram $\lambda .$
In fact we have

\begin{definition}
\label{M9}
Let $M_{\lambda }=(M_{1}(\lambda ),M_{2}(\lambda
),\ldots,M_{n}(\lambda ))$ and $M_{\lambda }^{\prime }=(M_{1}^{\prime }(\lambda
),M_{2}^{\prime }(\lambda ),\ldots,M_{n}^{\prime }(\lambda ))$ be two $\operatorname{RYS}$
then $M_{\lambda }$ is smaller then $M_{\lambda }^{\prime }$ which will be
denoted $M_{\lambda }<M_{\lambda }^{\prime }$ if 
\be
\exists j\in \{1,2,\ldots,n\}:M_{i}(\lambda )=M_{i}^{\prime }(\lambda )\quad
i<j\quad \wedge \quad M_{j}(\lambda )<M_{j}^{\prime }(\lambda ), 
\ee
and similarly for $\operatorname{CYS}.$
\end{definition}

Obviously the linear order in $\operatorname{RYS}$ or in $\operatorname{CYS}$ induce the linear order of $%
\operatorname{SYT}$ but these orders are not same. In fact using this definition as well
the definitions of $\operatorname{RYS}$ and $\operatorname{CYS}$ it is not difficult to prove the following
statement

\begin{proposition}
\label{M10}
Let $\lambda =(\lambda _{1},\lambda _{2}, \ldots ,\lambda _{k})$ be
a Young diagram and suppose that all $\operatorname{RYS}$ describing all $\operatorname{SYT}$ for $\lambda 
$ are ordered in the following way%
\be
\label{Ord1}
M_{\lambda }^{1}<M_{\lambda }^{2}< \ldots <M_{\lambda }^{k} 
\ee
then 
\be
N_{\lambda }^{1}>N_{\lambda }^{2}> \ldots >N_{\lambda }^{k}, 
\ee
where $M_{\lambda }^{i}$ and $N_{\lambda }^{i}$ are respectively $\operatorname{RYS}$ and 
$\operatorname{CYS}$ of standard Young tableau $T_{\lambda }^{i}.$ Thus the ordering of $%
\operatorname{SYT}$ induced by the linear ordering of $\operatorname{RYS}$ is opposite to the ordering of
the $\operatorname{SYT}$ induced by the linear ordering of $\operatorname{CYS}$. One can see that in~\eqref{Ord1} symbol $M_{\lambda }^{1}$ corresponds with the canonical row embedding, while $M_{\lambda }^{k}$ with the canonical column embedding.
\end{proposition}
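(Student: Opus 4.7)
The plan is to trace the first position $j$ at which the row Yamanouchi symbols of two distinct $\operatorname{SYT}$ disagree and to show that the column Yamanouchi symbols necessarily disagree at exactly that same index, with the opposite inequality. First I would unpack the hypothesis $M_\lambda<M'_\lambda$ in the sense of Definition~\ref{M9}: there is a smallest index $j$ such that $M_i(\lambda)=M'_i(\lambda)$ for $i<j$ and $M_j(\lambda)<M'_j(\lambda)$, where $M_\lambda$ and $M'_\lambda$ are the $\operatorname{RYS}$ of two standard Young tableaux $T_\lambda$ and $T'_\lambda$. Once this structure is clear, the proof becomes a careful bookkeeping of where entries $1,\ldots,j$ are placed.

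The preliminary observation I would use is that in an $\operatorname{SYT}$ the position of every entry is already determined by the $\operatorname{RYS}$ alone. Namely, after placing $1,\ldots,i-1$ one obtains a sub-Young-diagram $\lambda^{(i-1)}$, and because row $M_i(\lambda)$ must increase to the right, $i$ must occupy the leftmost empty cell of that row, i.e.\ column $\lambda^{(i-1)}_{M_i(\lambda)}+1$. Applying this inductively for $i<j$—where the two $\operatorname{RYS}$ agree—shows that the first $j-1$ entries sit in identical positions in both tableaux; in particular $\lambda^{(j-1)}$ is the same partial shape for both, and $N_i(\lambda)=N'_i(\lambda)$ for all $i<j$.

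The main step, and the only subtle one, is at index $j$. Entry $j$ sits at column $\lambda^{(j-1)}_{M_j(\lambda)}+1$ in $T_\lambda$ and at column $\lambda^{(j-1)}_{M'_j(\lambda)}+1$ in $T'_\lambda$, and both of these must be \emph{addable} corners of $\lambda^{(j-1)}$. I would claim $\lambda^{(j-1)}_{M_j(\lambda)}>\lambda^{(j-1)}_{M'_j(\lambda)}$. Since $\lambda^{(j-1)}$ is a Young diagram, the intermediate row lengths form a weakly decreasing chain $\lambda^{(j-1)}_{M_j(\lambda)}\geq\lambda^{(j-1)}_{M_j(\lambda)+1}\geq\cdots\geq\lambda^{(j-1)}_{M'_j(\lambda)}$. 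If equality held throughout, then row $M'_j(\lambda)-1$ would have the same length as row $M'_j(\lambda)$, which would contradict the addability of the corner at $(M'_j(\lambda),\lambda^{(j-1)}_{M'_j(\lambda)}+1)$ (addability at a row $r\geq 2$ requires row $r-1$ to be strictly longer than row $r$). Hence the inequality is strict, and translating back to column indices gives $N_j(\lambda)>N'_j(\lambda)$.

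Combining the two, the earliest index at which the $\operatorname{CYS}$ differ is exactly $j$, and there $N_j(\lambda)>N'_j(\lambda)$, so by Definition~\ref{M9} one obtains $N_\lambda>N'_\lambda$, which reverses the chain of inequalities as claimed. The remark about canonical embeddings follows immediately: filling rows greedily from the top produces the $\operatorname{RYS}$ $(1,\ldots,1,2,\ldots,2,\ldots)$, which is patently lex-smallest and equals $M^1_\lambda$; by the reversal just proved, the canonical column embedding—whose $\operatorname{CYS}$ is analogously lex-smallest—must correspond to the lex-largest $\operatorname{RYS}$, namely $M^k_\lambda$. The only nontrivial ingredient anywhere in the argument is the addability observation above; everything else is a direct unwrapping of the definitions of $\operatorname{RYS}$, $\operatorname{CYS}$ and $\operatorname{SYT}$.
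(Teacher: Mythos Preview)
Your argument is correct. The paper does not actually supply a proof of this proposition; it only remarks, just before the statement, that ``using this definition as well the definitions of $\operatorname{RYS}$ and $\operatorname{CYS}$ it is not difficult to prove'' the result. Your write-up fills in precisely the details the authors omit: the key observation that the first index $j$ at which two $\operatorname{RYS}$ disagree is also the first index at which the corresponding $\operatorname{CYS}$ disagree, together with the addable-corner argument showing that a strictly larger row index at position $j$ forces a strictly smaller column index there. Since the partial shape $\lambda^{(j-1)}$ is common to both tableaux and both placements of $j$ must be addable corners of it, the weak monotonicity of row lengths plus the strict inequality $\lambda^{(j-1)}_{M'_j-1}>\lambda^{(j-1)}_{M'_j}$ required for addability at row $M'_j$ indeed give $N_j>N'_j$. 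This is the natural and essentially the only way to unwind the definitions, so your approach is what the authors presumably had in mind.
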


It is clear that the action \ of the group $S(N)$ on $\operatorname{SYT}$ induce the action
of $S(N)$ on $\operatorname{RYS}$ and $\operatorname{CYS}$ of $\operatorname{SYT}$ and if standard Young tableau $%
T_{\lambda }$ (with $M_{\lambda }$ and $N_{\lambda }),$ $T_{\lambda
}^{\prime }$ (with $M_{\lambda }^{\prime }$ and $N_{\lambda }^{\prime })$ $%
\sigma \in S(N)$ are such that 
\be
\sigma (T_{\lambda })=T_{\lambda }^{\prime }, 
\ee
then 
\be
\sigma (M_{\lambda })=M_{\lambda }^{\prime },\qquad \sigma (N_{\lambda
})=N_{\lambda }^{\prime }. 
\ee
From Proposition~\ref{M41} and Proposition~\ref{M10} it follows

\begin{proposition}
\label{M11}
Let $\lambda =(\lambda _{1},\lambda _{2},\ldots,\lambda _{k})$ be a
Young diagram and suppose that all $\operatorname{RYS}$ describing all $\operatorname{SYT}$ for $\lambda $
are linearly ordered 
\be
M_{\lambda }^{1}<M_{\lambda }^{2}< \ldots <M_{\lambda }^{k},
\ee
where $M^1_{\lambda}$ corresponds to $T^1_{\lambda}$.
Then for any $M_{\lambda }^{i}$, $i=1,2,\ldots,k$ there exists unique
permutation $\sigma_{i}\in S(N)$ such that 
\be
\label{M11eq2}
M_{\lambda }^{i}=\sigma_{i}(M_{\lambda }^{1}),\qquad \sigma_{1}\equiv id. 
\ee
So we have a bijective concordance between the set of all $\operatorname{SYT}$ of a given
Young diagram $\lambda $ and a subset of permutations in $S(N).$
\end{proposition}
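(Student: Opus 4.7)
The plan is to deduce Proposition~\ref{M11} essentially as a corollary of Proposition~\ref{M41}, using the equivariance between the $S(N)$-action on standard Young tableaux and the induced action on their row Yamanouchi symbols, which is recorded in the paragraph immediately preceding the proposition.

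First I would pin down the bijection between SYT and RYS for a fixed shape $\lambda$. Given an RYS $M_\lambda = (M_1(\lambda),\ldots,M_n(\lambda))$, one can reconstruct the tableau uniquely: the entry $i$ lies in row $M_i(\lambda)$, and the defining property of an SYT forces the entries inside a row to appear left-to-right in increasing order, so the column of each entry is determined. Hence the linear ordering $M_\lambda^1 < M_\lambda^2 < \cdots < M_\lambda^k$ translates into a well-defined enumeration $T_\lambda^1, T_\lambda^2, \ldots, T_\lambda^k$ of all SYT of shape $\lambda$, with $T_\lambda^1$ corresponding (as noted in Proposition~\ref{M10}) to the canonical row embedding.

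Next I would invoke Proposition~\ref{M41} separately on each $T_\lambda^i$: it supplies a unique $\sigma_i \in S(N)$ with $\sigma_i T_\lambda^1 = T_\lambda^i$. Combining this with the stated equivariance ``if $\sigma(T_\lambda) = T_\lambda'$ then $\sigma(M_\lambda) = M_\lambda'$'' immediately yields $\sigma_i(M_\lambda^1) = M_\lambda^i$. Uniqueness of $\sigma_i$ transfers from Proposition~\ref{M41}, because if two permutations sent $M_\lambda^1$ to $M_\lambda^i$, the RYS-to-SYT bijection would force them to send $T_\lambda^1$ to the same $T_\lambda^i$, contradicting uniqueness there. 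The case $i=1$ is handled by noting that $\mathrm{id}$ fixes $T_\lambda^1$, and uniqueness forces $\sigma_1 = \mathrm{id}$.

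Finally, to obtain the bijective concordance claimed in the last sentence of the proposition, I would note that the map $T_\lambda^i \mapsto \sigma_i$ is injective: if $\sigma_i = \sigma_j$, then $T_\lambda^i = \sigma_i T_\lambda^1 = \sigma_j T_\lambda^1 = T_\lambda^j$, so $i=j$. Hence the $k$ standard Young tableaux of shape $\lambda$ are in bijection with the subset $\{\sigma_1,\ldots,\sigma_k\} \subset S(N)$. There is no real obstacle: the whole statement is a repackaging of Proposition~\ref{M41} through the RYS encoding, and the only step requiring a brief argument is the reconstruction of an SYT from its RYS.
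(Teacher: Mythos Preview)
Your proposal is correct and matches the paper's approach exactly: the paper states only that Proposition~\ref{M11} ``follows from Proposition~\ref{M41} and Proposition~\ref{M10}'' without further argument, and your write-up simply unpacks that implication via the RYS--SYT bijection and the equivariance recorded just before the proposition. If anything, you supply more detail than the paper does.
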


Now we describe a construction of Young-Yamanouchi irreducible
representations of the group $S(N).$ The dimension of an irreducible
representation of $S(N)$ indexed by a partition $\lambda $ is determined by
the corresponding Young diagram and \ it will be denoted $d_{\lambda }.$ The
construction of irreducible representations of $S(N)$ is based on the fact,
that the basis vectors of the representation space may be indexed by the set
of $\operatorname{SYT}$ for the Young diagram $\lambda .$ Because, for a given $\lambda $
any standard Young tableau $T_{\lambda }$ may be described uniquely by the
corresponding row Yamanouchi symbol $M_{\lambda }$, so the basis vectors in
the representation space may be labelled by $M_{\lambda }$, which
additionally introduce a linear order in the set of basis vectors in the
representation space (see Def.~\ref{M9} and Prop.~\ref{M10}). So the orthonormal basis of
the representation space for the Young diagram $\lambda $ will be denoted in
the following way%
\be
\label{indexation}
\{e^{\lambda}_{T^i_{\lambda}} \equiv e_{M_{\lambda }^{i}}^{\lambda }:i=1,2,\ldots,d_{\lambda }\},
\ee
 and the order of the basis is induced by the order of the $\operatorname{RYS}$ $M_{\lambda
}^{1}<M_{\lambda }^{2}< \ldots <M_{\lambda }^{d_{\lambda }}.$

It is known that the symmetric group $S(N)$ is generated by the
transpositions of the form $(k \ k+1),$ $k=1,2,\ldots,n-1$, thus in order to define
a representation of $S(N)$ 
\be
D^{\lambda }:S(N)\rightarrow \operatorname{Hom}(\operatorname{span}_{%
\mathbb{C}
}\{e_{M_{\lambda }^{i}}^{\lambda }:i=1,2,\ldots ,d_{\lambda }\}) 
\ee

it is enough to define the representation operators for the generators $%
(k \ k+1)$ only. By definition these generators acts on the basis vectors $%
\{e_{M_{\lambda }^{i}}^{\lambda }:i=1,2,\ldots ,d_{\lambda }\}$ in the following
way 
\be
\label{Mx}
D^{\lambda }(k \ k+1)(e_{M_{\lambda }^{i}}^{\lambda })=\rho^{-1} (T_{\lambda
};k+1 \ k)e_{M_{\lambda }^{i}}^{\lambda }+\sqrt{1-\rho^{-2} (T_{\lambda
};k+1 \ k)}e_{(k \ k+1)M_{\lambda }^{i}}^{\lambda }
\ee
where the second term on $\operatorname{RHS}$ appears only if $(k \ k+1)T_{\lambda }$ is a $%
\operatorname{SYT}$, in this case $(k \ k+1)M_{\lambda }^{i}=M_{\lambda }^{j},$ $%
j=1,\ldots ,d_{\lambda }.$

It is known that for any irreducible representations $D^{\lambda }$ of $S(N)$
the composition of representations $\sgn D^{\lambda }$ is also a
representation of $S(N)$ and moreover we have the following 
\begin{lemma}~\cite{Procesi}
\label{SN}
Suppose that we are given with two inequivalent irreducible representations $D^{\lambda}$ and $\operatorname{sgn} D^{\lambda^t}$, where $\lambda^{t}$ denotes dual partition to $\lambda$. Then irreducible representations  $\operatorname{sgn} D^{\lambda^t}$ and $D^{\lambda}$ are isomorphic. 
\end{lemma}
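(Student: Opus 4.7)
The plan is to reduce the isomorphism claim to a character computation. First I would verify that $\sgn D^{\lambda^{t}}$ is a well-defined representation of $S(N)$ and is irreducible: the sign is a one-dimensional linear character, and twisting an irreducible matrix representation by such a character preserves both the group-homomorphism property and irreducibility (any $\sgn D^{\lambda^{t}}$-invariant subspace is automatically $D^{\lambda^{t}}$-invariant, since each $\sgn(\sigma)\in\{\pm 1\}$ acts as a scalar). It therefore only remains to pin down its isomorphism class, and by the usual character-theoretic criterion for finite groups this amounts to matching characters.

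The central input is the classical identity
\be
\chi^{\lambda^{t}}(\sigma)=\sgn(\sigma)\,\chi^{\lambda}(\sigma),\quad \forall \sigma\in S(N),
\ee
which is the character-level incarnation of the Specht-module isomorphism $S^{\lambda^{t}}\cong S^{\lambda}\otimes\sgn$ (see e.g.\ \cite{Procesi,Fulton}). Granted this identity, the character of $\sgn D^{\lambda^{t}}$ satisfies
\be
\chi_{\sgn D^{\lambda^{t}}}(\sigma)=\sgn(\sigma)\,\chi^{\lambda^{t}}(\sigma)=\sgn(\sigma)^{2}\,\chi^{\lambda}(\sigma)=\chi^{\lambda}(\sigma),
\ee
so $\sgn D^{\lambda^{t}}$ and $D^{\lambda}$ share the same character and are therefore equivalent as representations of $S(N)$.

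The main obstacle is of course establishing the classical identity itself. Inside the Young-Yamanouchi framework of this paper one can verify it directly on the generating transpositions $\tau=(k\ k{+}1)$: Eq.~\eqref{Mx} places the diagonal entry $\rho^{-1}(T_{\lambda};k{+}1,k)$ of $D^{\lambda}(\tau)$ on $e_{T_{\lambda}}^{\lambda}$, while Proposition~\ref{M6} shows that the corresponding diagonal entry of $D^{\lambda^{t}}(\tau)$ on the conjugate basis vector carries the opposite sign; the degenerate cases where $k$ and $k{+}1$ lie in the same row or column of $T_{\lambda}$ are handled separately and also flip between $\pm 1$ under conjugation of tableaux, while the off-diagonal contributions are invisible to the trace. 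Hence $\chi^{\lambda^{t}}(\tau)=-\chi^{\lambda}(\tau)=\sgn(\tau)\,\chi^{\lambda}(\tau)$ for every transposition. Since both sides of the identity are class functions on $S(N)$, propagation to arbitrary cycle types follows from a standard induction supplied, for example, by the Murnaghan-Nakayama rule (equivalently, by the $\omega$-involution $s_{\lambda}\mapsto s_{\lambda^{t}}$ on symmetric functions, which acts as $\sgn$ on the power-sum basis via the Frobenius characteristic map). Alternatively, the identity can simply be imported from the Specht-module literature cited above as a black box, which is the route implicit in the lemma's reference to~\cite{Procesi}.
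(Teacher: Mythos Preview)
The paper does not prove this lemma at all: it is stated with a citation to~\cite{Procesi} and used as a black box. Your final sentence (``the identity can simply be imported from the Specht-module literature\ldots which is the route implicit in the lemma's reference'') is therefore exactly what the paper does.

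That said, your character-theoretic reduction is correct and standard. The only part that is logically muddled is the attempt to prove the identity $\chi^{\lambda^{t}}=\sgn\cdot\chi^{\lambda}$ by first checking it on transpositions via Eq.~\eqref{Mx} and then ``propagating'' to arbitrary cycle types. Knowing that two class functions agree on the single conjugacy class of transpositions does not, by itself, force agreement elsewhere; the Murnaghan--Nakayama rule (or the $\omega$-involution argument) that you invoke for the propagation step is already a complete, self-contained proof of the identity and does not take the transposition case as input. So the Young--Yamanouchi transposition computation you sketch is a nice consistency check but is not actually a building block of the argument you outline.

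It is worth noting that the paper's subsequent proof of Theorem~\ref{hypo1} implicitly supplies a different, constructive proof of this very lemma: there an explicit unitary $U$ is written down and one verifies $UD^{\lambda}(k\ k{+}1)U^{\dagger}=-D^{\lambda^{t}}(k\ k{+}1)$ on each Coxeter generator. Because one is comparing two group homomorphisms (not merely their traces), agreement on a generating set \emph{does} extend to all of $S(N)$, which is precisely the step that fails in a purely character-based argument. So the generator computation you sketched would succeed if carried out at the level of the full matrices rather than their traces; that is the paper's route, and it has the bonus of producing the intertwiner explicitly rather than merely asserting its existence.
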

From this lemma it follows that 
\be
\exists U\in U(d_{\lambda})\quad \forall \sigma \in S(N)\quad D^{\lambda
^{t}}(\sigma )=\operatorname{sgn}(\sigma )UD^{\lambda }(\sigma )U^{\dagger}.
\ee
The examples calculated in the previous section suggest that this matrix $U$
has a very simple anti-diagonal form with $\pm 1$ on the anti-diagonal as in equation~\eqref{formU}. Now
we are ready to prove the this hypothesis.

We have two irreducible representations $D^{\lambda }$ and $D^{\lambda ^{t}}$
of the group $S(N)$ acting respectively in the representation spaces $%
\operatorname{span}_{\mathbb{C}}\{e_{M_{\lambda }^{i}}^{\lambda }:i=1,2,\ldots ,d_{\lambda }\}$ and $%
\operatorname{span}_{\mathbb{C}}\{e_{M_{\lambda ^{t}}^{i}}^{\lambda ^{t}}:i=1,2,\ldots,d_{\lambda }\}$ of the
same dimension. From Prop.~\ref{M8} we get 
\be
e_{M_{\lambda ^{t}}^{i}}^{\lambda ^{t}}=e_{N_{\lambda }}^{\lambda
^{t}}:i=1,2,\ldots ,d_{\lambda }
\ee
and from Prop.~\ref{M10} \ it follows that the base $\{e_{M_{\lambda
^{t}}^{i}}^{\lambda ^{t}}:i=1,2,\ldots ,d_{\lambda }\}$ has an oposite order with
respect to the order of the basis $\{e_{M_{\lambda }^{i}}^{\lambda
}:i=1,2,\ldots,d_{\lambda }\}.$ Now let us consider a unitary transformation
between these bases 
\be
U(e_{M_{\lambda }^{i}}^{\lambda })\equiv \operatorname{sgn}(\sigma_{i})e_{N_{\lambda
}^{i}}^{\lambda ^{t}}=\operatorname{sgn}(\sigma_{i})e_{M_{\lambda ^{t}}^{i}}^{\lambda ^{t}}
\ee
where $\sigma_{i}$ are defined in Prop.~\ref{M11}. 

\begin{remark}
\label{rem_new1}
 Using the isomorphism $V^{\ast }\otimes V \simeq \operatorname{End}(V),$ where $V$ is a
linear space and $V^{\ast }$ is a dual of $V$, the unitary transformation $U$
may be written in the following operator form%
\be
U=\sum_{M_{\lambda }^{i}}\sgn(\sigma _{i})e_{M_{\lambda }^{i}}^{\ast \lambda
}\otimes e_{M_{\lambda ^{t}}^{i}}^{\lambda ^{t}}=\sum_{T_{\lambda
}^{i}}\sgn(T_{\lambda }^{i})|T_{\lambda ^{t}}^{i}\>\<T_{\lambda }^{i}|
\ee
where $\sgn(T_{\lambda }^{i})=\sgn(\sigma _{i})$ (Props.~\ref{M41},~\ref{M11}) and in the
last equation we have introduced a physical $"bra","ket"$ notation $%
e_{M_{\lambda ^{t}}^{i}}^{\lambda ^{t}}\equiv |T_{\lambda }^{i}\>.$
Note also if we chose another $\operatorname{SYT}$ as $T_{\lambda }^{1}$ in Prop.~\ref{M41}, then the corresponding matrix $U$ will differ from the
initial one by a global sign but the similarity transformation defined by
these matrices will be the same.
\end{remark}

The action of $U$ on both sides
of the equation~\eqref{Mx} gives 
\be
\begin{split}
UD^{\lambda }(k \ k+1)U^{\dagger}\operatorname{sgn}(\sigma_{i})(e_{M_{\lambda ^{t}}^{i}}^{\lambda
^{t}})=&\rho^{-1} (T_{\lambda };k+1 \ k)\operatorname{sgn}(\sigma_{i})e_{M_{\lambda
_{t}}^{i}}^{\lambda ^{t}}+ \\
&+\sqrt{1-\rho^{-2} (T_{\lambda };k+1 \ k)}\operatorname{sgn}((k \ k+1)\sigma
_{i})e_{(k \ k+1)M_{\lambda ^{t}}^{i}}^{\lambda ^{t}}.
\end{split}
\ee
Using the properties of the representation $\operatorname{sgn}$ and Prop.~\ref{M6} we get 
\be
UD^{\lambda }(k \ k+1)U^{\dagger}(e_{M_{\lambda ^{t}}^{i}}^{\lambda ^{t}})=-\rho^{-1}
(T_{\lambda ^{t}}^{t};k+1 \ k)e_{M_{\lambda _{t}}^{i}}^{\lambda ^{t}}-%
\sqrt{1-\rho^{-2} (T_{\lambda ^{t}}^{t};k+1 \ k)}e_{(k \ k+1)M_{\lambda
^{t}}^{i}}^{\lambda ^{t}}
\ee
which means that 
\be
-UD^{\lambda }(k \ k+1)U^{\dagger}=D^{\lambda ^{t}}(k \ k+1),\quad k=1,2,\ldots ,n-1
\ee
and consequently we get 
\be
\forall \sigma \in S(N)\quad D^{\lambda ^{t}}(\sigma )=\operatorname{sgn}(\sigma
)UD^{\lambda }(\sigma )U^{\dagger}.
\ee
In the bases $\{e_{M_{\lambda }^{i}}^{\lambda }:i=1,2,\ldots ,d_{\lambda }\}$ and 
$\{e_{M_{\lambda ^{t}}^{i}}^{\lambda ^{t}}:i=1,2,\ldots ,d_{\lambda }\}$ the
operator $U$ takes the  matrix form as in equation~\eqref{formU},
so one can state 
\begin{theorem}
\label{hypo1}
The operator unitary $U$ which defines a similarity transformations between
two conjugated \ Young-Yamanouchi irreducible representations of $S(n)$ with
bases $\{e_{M_{\lambda }^{i}}^{\lambda }:i=1,2,\ldots,d_{\lambda }\}$ and $%
\{e_{M_{\lambda ^{t}}^{i}}^{\lambda ^{t}}:i=1,2,\ldots,d_{\lambda }\}$  may be
written in the following way%
\be
U=\sum_{M_{\lambda }^{i}}\sgn(\sigma _{i})e_{M_{\lambda }^{i}}^{\ast \lambda
}\otimes e_{M_{\lambda ^{t}}^{i}}^{\lambda ^{t}}=\sum_{T_{\lambda
}^{i}}\sgn(T_{\lambda }^{i})|T_{\lambda ^{t}}^{i}\>\<T_{\lambda }^{i}|
\ee
where $\sgn(T_{\lambda }^{i})=\sgn(\sigma _{i})$ (see Prop.~\ref{M11}) and the relation between Yamanouchi symbols $M_{\lambda }^{i}$ and $SYT
$ $T_{\lambda }^{i}$ is described in Def.~\ref{M7} (see also Remark~\ref{rem_new1}). If  the bases $\{e_{M_{\lambda }^{i}}^{\lambda }:i=1,2,\ldots,d_{\lambda }\}$
and $\{e_{M_{\lambda ^{t}}^{i}}^{\lambda ^{t}}:i=1,2,\ldots,d_{\lambda }\}$ are
ordered according lexicographic order (see 
Def.~\ref{M9} and Prop.~\ref{M10}), then the matrix of $U$
has the following  form 
\be
U=\left( 
\begin{array}{cccccc}
0 & 0 & . & . & . & \sgn(\sigma _{1}) \\ 
0 &  &  &  & \sgn(\sigma _{2}) & 0 \\ 
. &  &  & . &  & . \\ 
. &  & . &  &  & . \\ 
0 & \sgn(\sigma _{d_{\lambda }-1}) &  &  &  & 0 \\ 
\sgn(\sigma _{d_{\lambda }}) & 0 & . & . & 0 & 0%
\end{array}%
\right) .
\ee
\end{theorem}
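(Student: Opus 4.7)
My plan is to define $U$ explicitly by the formula stated in the theorem, and then verify the intertwining relation $D^{\lambda^{t}}(\sigma) = \sgn(\sigma)\, U D^{\lambda}(\sigma) U^{\dagger}$ on a generating set of $S(N)$. Concretely, fix $T_{\lambda}^{1}$ as in Proposition~\ref{M41} and set
$$U(e_{M_{\lambda}^{i}}^{\lambda}) := \sgn(\sigma_{i})\, e_{M_{\lambda^{t}}^{i}}^{\lambda^{t}},$$
where $\sigma_{i}$ is the unique permutation of Proposition~\ref{M11} satisfying $\sigma_{i} T_{\lambda}^{1}=T_{\lambda}^{i}$. Unitarity is immediate because $U$ sends an orthonormal basis to an orthonormal basis up to $\pm 1$. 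Since $S(N)$ is generated by the adjacent transpositions $(k\ k+1)$ and both sides of the desired identity are homomorphic in $\sigma$ (using that $\sgn$ is a multiplicative character), it will suffice to verify the identity $U D^{\lambda}(k\ k+1) U^{\dagger} = -D^{\lambda^{t}}(k\ k+1)$ for each generator.

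Next I would carry out the central computation. Apply $U^{\dagger}$ to $e_{M_{\lambda^{t}}^{i}}^{\lambda^{t}}$, producing $\sgn(\sigma_{i}) e_{M_{\lambda}^{i}}^{\lambda}$; then apply $D^{\lambda}(k\ k+1)$ using the Young--Yamanouchi formula~\eqref{Mx}, and finally apply $U$ to each of the two resulting terms. The diagonal term picks up an extra $\sgn(\sigma_{i})$, giving $\sgn(\sigma_{i})^{2}=1$; the off-diagonal term, involving $e^{\lambda}_{(k\ k+1) M_{\lambda}^{i}}$, picks up $\sgn((k\ k+1)\sigma_{i}) \sgn(\sigma_{i}) = -1$. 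Thus after $U$ is applied, the off-diagonal coefficient flips sign. Invoking Proposition~\ref{M6}, $\rho(T_{\lambda^{t}}^{t}; k+1, k) = -\rho(T_{\lambda}; k+1, k)$, so $\rho^{-1}$ flips sign while $\rho^{-2}$ is unchanged; combining, the right-hand side becomes exactly $-D^{\lambda^{t}}(k\ k+1) e_{M_{\lambda^{t}}^{i}}^{\lambda^{t}}$, as required.

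For the anti-diagonal matrix form, I would combine Proposition~\ref{M8} (the $\operatorname{RYS}$ of the conjugate tableau equals the $\operatorname{CYS}$ of the original) with Proposition~\ref{M10} (the linear orders induced by $\operatorname{RYS}$ and $\operatorname{CYS}$ on the $\operatorname{SYT}$ of $\lambda$ are mutually reversed). Ordering both bases lexicographically by $\operatorname{RYS}$, the tableau $T_{\lambda}^{i}$ (the $i$-th smallest $\operatorname{RYS}$ of $\lambda$) has the $i$-th largest $\operatorname{CYS}$ of $\lambda$, hence its conjugate $T_{\lambda^{t}}^{t}$ is the $(d_{\lambda}-i+1)$-th element in the ordered basis for $\lambda^{t}$. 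Therefore $U$ sends the $i$-th basis vector to $\pm$ the $(d_{\lambda}-i+1)$-th, with entries $\sgn(\sigma_{i})$, yielding exactly the anti-diagonal shape in equation~\eqref{formU}.

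The main subtlety I expect is bookkeeping around the sign $\sgn(\sigma_{i})$: I need $\sgn((k\ k+1)\sigma_{i})=-\sgn(\sigma_{i})$ to produce exactly the flip that matches the factor $\sgn(\sigma)=-1$ of the transposition, and I must check that when $(k\ k+1)T_{\lambda}^{i}$ is again an $\operatorname{SYT}$, the permutation labelling the new tableau is indeed $(k\ k+1)\sigma_{i}$ as an element of $S(N)$ (not merely up to ambiguity in the choice of $T_{\lambda}^{1}$). This ambiguity is controlled by Remark~\ref{M41A}: a different choice of $T_{\lambda}^{1}$ multiplies every $\sigma_{i}$ by a common permutation, hence multiplies $U$ by the single global sign $\sgn$ of that permutation, which does not affect the similarity relation $U D^{\lambda}(\sigma) U^{\dagger}$. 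Once this is noted, the rest is a direct application of the formulas and the argument above.
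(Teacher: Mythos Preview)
Your proposal is correct and follows essentially the same route as the paper: define $U$ by $U(e_{M_{\lambda}^{i}}^{\lambda})=\sgn(\sigma_{i})\,e_{M_{\lambda^{t}}^{i}}^{\lambda^{t}}$, verify the intertwining relation on the generators $(k\ k+1)$ via the Young--Yamanouchi formula~\eqref{Mx} together with Proposition~\ref{M6} and the identity $\sgn((k\ k+1)\sigma_{i})=-\sgn(\sigma_{i})$, and then deduce the anti-diagonal shape from Propositions~\ref{M8} and~\ref{M10}. Your treatment of the sign bookkeeping and of the dependence on the choice of $T_{\lambda}^{1}$ (Remark~\ref{M41A}) is in fact slightly more explicit than the paper's own argument.
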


One can see that we can also reformulate above statement without referring to particular ordering of Young-Yamanouchi basis, namely we can write:
\begin{proposition}
\label{MH}
Similarity matrix $U$ can be also written in the following form
\be
U=\sum_{T_{\lambda}} \sgn(T_{\lambda})|T_{\lambda^t}\>\< T_{\lambda}|,
\ee
where $\sgn(T_{\lambda})=\sgn(\sigma)$ and $\sigma$ is the permutation that transforms arbitrary chosen, fixed $\operatorname{SYT}$ $T_{\lambda}^{\times}$ into $T_{\lambda}$ (see Prop.~\ref{M41} and Rem.~\ref{M41A}). For different choices of $T_{\lambda}^{\times}$, the corresponding $U$ may differ by a global sign.
\end{proposition}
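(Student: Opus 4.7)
The plan is to deduce Proposition \ref{MH} directly from Theorem \ref{hypo1} by analyzing how the labeling function $\sgn(T_\lambda)$ changes when we replace the canonical reference tableau $T_\lambda^1$ (the canonical row embedding used throughout Section \ref{symm2}) by an arbitrary fixed reference tableau $T_\lambda^\times$. Since Theorem \ref{hypo1} already provides the operator expression $U = \sum_{T_\lambda^i} \sgn(T_\lambda^i)\, |T_{\lambda^t}^i\rangle\langle T_\lambda^i|$ with $\sgn(T_\lambda^i)=\sgn(\sigma_i)$ defined via $\sigma_i T_\lambda^1 = T_\lambda^i$, only the bookkeeping of signs needs to be handled.

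First, I would invoke Proposition \ref{M41}: for the chosen reference $T_\lambda^\times$, there exists a unique permutation $\tau \in S(N)$ such that $\tau T_\lambda^1 = T_\lambda^\times$, and for each SYT $T_\lambda^i$ there exists a unique $\sigma_i' \in S(N)$ such that $\sigma_i' T_\lambda^\times = T_\lambda^i$. Combining these gives $\sigma_i' \tau T_\lambda^1 = T_\lambda^i$, and by the uniqueness clause of Proposition \ref{M41} we conclude $\sigma_i = \sigma_i' \tau$. Passing to signs, $\sgn(\sigma_i') = \sgn(\sigma_i)\,\sgn(\tau)$, i.e.\ the new labeling differs from the old one by the fixed overall factor $\sgn(\tau)\in\{\pm 1\}$.

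Next, I would substitute this into the operator formula. Writing $\sgn_\times(T_\lambda^i) := \sgn(\sigma_i')$ for the sign defined relative to $T_\lambda^\times$, we obtain
\be
\sum_{T_\lambda} \sgn_\times(T_\lambda)\, |T_{\lambda^t}\rangle\langle T_\lambda|
= \sgn(\tau)\sum_{T_\lambda^i} \sgn(\sigma_i)\,|T_{\lambda^t}^i\rangle\langle T_\lambda^i|
= \sgn(\tau)\, U,
\ee
where $U$ is the operator from Theorem \ref{hypo1}. Thus the right-hand side of Proposition \ref{MH} equals $\pm U$, and since by Lemma \ref{lemma:2} multiplying $U$ by a global scalar of modulus one preserves the similarity relation, the operator defined by the new reference still implements the intertwiner between $D^\lambda$ and $\sgn\,D^{\lambda^t}$. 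The explicit $\sgn(\tau)$ also gives the claimed global-sign ambiguity.

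The argument is essentially bookkeeping: the only substantive point is the uniqueness statement in Proposition \ref{M41}, which forces $\sigma_i=\sigma_i'\tau$ and hence converts a change of reference SYT into a uniform sign change. I do not anticipate any serious obstacle; the main thing to keep straight is the order of composition (acting on tableaux from the left), so that the identification $\sigma_i=\sigma_i'\tau$ is correct rather than $\tau\sigma_i'$, which would change the sign factor to $\sgn(\tau)$ regardless but must be justified via the uniqueness of the $S(N)$-action on $\operatorname{YT}$.
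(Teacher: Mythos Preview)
Your argument is correct and follows exactly the route the paper takes: the paper presents Proposition~\ref{MH} as a direct reformulation of Theorem~\ref{hypo1} (``One can see that we can also reformulate above statement\ldots''), with the global-sign claim already noted in Remark~\ref{rem_new1}. You have simply written out the bookkeeping ($\sigma_i=\sigma_i'\tau$ via the uniqueness in Proposition~\ref{M41}) that the paper leaves implicit.
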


\section{Applications}
\label{appl}
In this section we present some mathematical application of isomorphism between $D^{\lambda}$ and $\mathbf{\sgn} D^{\lambda}$ (see Section~\ref{symm2}). Let us consider $n-$particle system on Hilbert spaces $\mathcal{H}_A^{\ot n}$ and $\mathcal{H}_B^{\ot n}$, then full Hilbert space o such system is of course $\mathcal{H}_A^{\ot n} \ot \mathcal{H}_B^{\ot n}=\left(\mathcal{H}_A \ot \mathcal{H}_B\right)^{\ot n}\equiv \mathcal{H}_{AB}^{\ot n}$. Our goal here is to study antisymmetric projector $\operatorname{P}_{as}$ and decompose it into direct sum of some smaller projectors, which can be studied separately. Suppose that $\mathcal{H}_{A(B)}^{\ot n}=\left(\mathbb{C}^d\right)^{\ot n}$, then we can define some special class of operators called permutation operators $V$ in the following way
\begin{definition}
\label{def11}
$V:$ $S(n)$ $\rightarrow \Hom(\mathcal{(%
\mathbb{C}
}^{d})^{\otimes n})$ and 
\be
\forall \sigma \in S(n)\qquad V_{\sigma}.e_{i_{1}}\otimes e_{i_{2}}\otimes
..\otimes e_{i_{n}}=e_{i_{\sigma ^{-1}(1)}}\otimes e_{i_{\sigma
^{-1}(2)}}\otimes ..\otimes e_{i_{\sigma ^{-1}(n)}}, 
\ee
where $d\in 
\mathbb{N}
$ and $\{e_{i}\}_{i=1}^{d}$ is an orthonormal basis of the space $\mathcal{%
\mathbb{C}
}^{d}.$
\end{definition}
Now using above definition we are ready to formulate
\begin{fact}
Antisymmetric projector $\operatorname{P}_{as}$ acting on $\mathcal{H}_{AB}^{\ot n}$ can be written in the form
\be
\operatorname{P}_{as}=\bigoplus_{\lambda \vdash n}\mathbf{1}_{\lambda}^A \ot \mathbf{1}_{\lambda^t}^B\ot W_{\lambda \lambda^t}^{AB},
\ee
where identities and operator $W_{\lambda \lambda^t}^{AB}$ act on unitary and symmetric part respectively. By $\lambda^{t}$ we denote conjugate Young diagram and operator $W_{\lambda \lambda^t}^{AB}=|\psi\>\<\psi|_{AB}^{\lambda, \lambda^t}$, where
\be
\label{W}
|\psi\>_{AB}^{\lambda,\lambda^t}=\sum_{T_{\lambda}} \sgn(T_{\lambda})|T_{\lambda}\>_A |T_{\lambda^t}\>_{B}.
\ee
Interpretation of  $\sgn(T_{\lambda})$ is given in Proposition~\ref{MH}. 
\end{fact}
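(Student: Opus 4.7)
The plan is to combine Schur--Weyl duality applied on both sides $A$ and $B$ with the explicit form of the similarity matrix $U$ given by Theorem~\ref{hypo1} and Proposition~\ref{MH}. First I would invoke Schur--Weyl to write
\be
\mathcal{H}_A^{\ot n}=\bigoplus_{\lambda\vdash n} U_\lambda^A\ot S_\lambda^A,\qquad \mathcal{H}_B^{\ot n}=\bigoplus_{\mu\vdash n} U_\mu^B\ot S_\mu^B,
\ee
where the $S(n)$-action sits entirely in the Specht factors $S_\lambda,S_\mu$ via $D^\lambda,D^\mu$, and the unitary groups act on $U_\lambda,U_\mu$. Consequently the diagonal permutation $V_\sigma^{AB}=V_\sigma^A\ot V_\sigma^B$ restricts to each block as $\mathbf{1}_\lambda^A\ot\mathbf{1}_\mu^B\ot(D^\lambda(\sigma)\ot D^\mu(\sigma))$, and the antisymmetriser $\operatorname{P}_{as}=\tfrac{1}{n!}\sum_\sigma \sgn(\sigma)V_\sigma^{AB}$ is block diagonal with blocks $\mathbf{1}_\lambda^A\ot\mathbf{1}_\mu^B\ot P_{\lambda,\mu}$, where
\be
P_{\lambda,\mu}=\frac{1}{n!}\sum_{\sigma\in S(n)}\sgn(\sigma)\,D^\lambda(\sigma)\ot D^\mu(\sigma).
\ee

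Next I would observe that $P_{\lambda,\mu}$ is the projector onto the $\sgn$-isotypic component of $S_\lambda\ot S_\mu$. Using self-duality of the irreps of $S(n)$ together with Lemma~\ref{SN} ($S_\mu\ot\sgn\simeq S_{\mu^t}$) and Schur's Lemma, one gets
\be
\dim\Hom_{S(n)}(\sgn,S_\lambda\ot S_\mu)=\dim\Hom_{S(n)}(S_{\mu^t},S_\lambda)=\delta_{\mu,\lambda^t},
\ee
so only the blocks with $\mu=\lambda^t$ survive and each $P_{\lambda,\lambda^t}$ has rank one. This already yields the announced direct-sum decomposition, pinning down the Young-frame pattern $(\lambda,\lambda^t)$.

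To identify $P_{\lambda,\lambda^t}$ as the projector $W_{\lambda\lambda^t}^{AB}$ onto $|\psi\>^{\lambda,\lambda^t}_{AB}$ from~\eqref{W}, I would use $\sgn(\sigma)D^{\lambda^t}(\sigma)=U D^\lambda(\sigma)U^\dagger$ (Theorem~\ref{hypo1}) to conjugate away the sign and the transpose-partition:
\be
P_{\lambda,\lambda^t}=(\mathbf{1}\ot U)\left[\frac{1}{n!}\sum_\sigma D^\lambda(\sigma)\ot D^\lambda(\sigma)\right](\mathbf{1}\ot U^\dagger).
\ee
The bracketed operator is the trivial-isotypic projector of the diagonal action of $S(n)$ on $S_\lambda\ot S_\lambda$. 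Since the Young--Yamanouchi matrices defined in~\eqref{Mx} are real orthogonal ($D^\lambda(\sigma)^T=D^\lambda(\sigma^{-1})$), the unique (up to scale) invariant vector is $\frac{1}{\sqrt{d_\lambda}}\sum_{T_\lambda}|T_\lambda\>_A|T_\lambda\>_B$. Applying $\mathbf{1}\ot U$ and using $U|T_\lambda\>=\sgn(T_\lambda)|T_{\lambda^t}\>$ from Proposition~\ref{MH} turns this vector into $\frac{1}{\sqrt{d_\lambda}}\sum_{T_\lambda}\sgn(T_\lambda)|T_\lambda\>_A|T_{\lambda^t}\>_B$, which, up to the normalisation absorbed into $W_{\lambda\lambda^t}^{AB}$, is precisely $|\psi\>_{AB}^{\lambda,\lambda^t}$ of~\eqref{W}.

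The main obstacle is the real-orthogonality step: showing that $\sum_{T_\lambda}|T_\lambda\>|T_\lambda\>$ is actually $S(n)$-invariant, which requires the Young--Yamanouchi generators to be real symmetric. This follows at once by inspecting~\eqref{Mx}, but it is the point where the concrete choice of basis really matters. Once this is in place, the only remaining work is routine bookkeeping of the $d_\lambda$ normalisation between the rank-one projector $P_{\lambda,\lambda^t}$ and the unnormalised ket $|\psi\>$ of~\eqref{W}.
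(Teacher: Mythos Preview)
Your argument is correct and follows essentially the same high-level strategy as the paper: Schur--Weyl block-diagonalise $\operatorname{P}_{as}$, use the equivalence $D^{\lambda^t}\simeq\sgn\cdot D^\lambda$ via the matrix $U$ of Theorem~\ref{hypo1}/Proposition~\ref{MH}, and invoke the reality of the Young--Yamanouchi matrices to pin down the rank-one vector. The one technical difference is how the rank-one block is identified. The paper works through the vectorisation (Choi) isomorphism $\Psi[C]=\sum_{ij}c_{ij}|i\rangle|j\rangle$ and the identity $X\otimes Y\,\Psi[C]=\Psi[XCY^{T}]$, so that $P_{\lambda,\lambda^t}\Psi[C]$ becomes $\Psi$ of an intertwiner, and Schur's Lemma forces $C\sim U$; you instead recognise $P_{\lambda,\mu}$ directly as the isotypic projector onto the $\sgn$-component of $S_\lambda\otimes S_\mu$, compute its rank via $\Hom_{S(n)}(S_{\mu^t},S_\lambda)$, and then conjugate by $\mathbf{1}\otimes U$ to the trivial-isotypic projector on $S_\lambda\otimes S_\lambda$. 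Both routes need the same ingredient (real orthogonality of $D^\lambda$, which in the paper enters through the transpose in $Y^{T}$), and your version makes that dependence more explicit while also giving a cleaner reason why the off-block terms $\mu\neq\lambda^t$ vanish.
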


\begin{proof}
Proof is based on direct calculations and Schur-Weyl duality described for example in~\cite{RWallach}. First of all let us write projector on antisymmetric subspace of $\mathcal{H}_{AB}^{\ot n}$ as
\be
\label{beg}
\operatorname{P}_{as}=\frac{1}{n!}\sum_{\sigma \in S(n)} \operatorname{sgn}(\sigma)V_{\sigma}^{A}\ot V_{\sigma}^B.
\ee
Thanks to Schur-Weyl duality we can decompose every permutation operator into direct sum of irreducible components
\begin{equation}
\label{decompSwap}
V_{\sigma}^A=\bigoplus_{\lambda_A}\mathbf{1}_{\lambda_A}\ot \left(V_{\sigma}^A \right)_{\lambda_A},\quad V_{\sigma}^B=\bigoplus_{\lambda_B}\mathbf{1}_{\lambda_B}\ot \left(V_{\sigma}^B \right)_{\lambda_B}.
\end{equation}
Now putting equations~\eqref{decompSwap} into equation~\eqref{beg} we obtain
\be
\operatorname{P}_{as}=\bigoplus_{\lambda_A,\lambda_B}\mathbf{1}_{\lambda_A}\ot \mathbf{1}_{\lambda_B}\ot \left[\frac{1}{n!}\sum_{\sigma\in S(n)}\operatorname{sgn}(\sigma)\left(V_{\sigma}^A \right)_{\lambda_A}\ot \left(V_{\sigma}^B \right)_{\lambda_B} \right]=\bigoplus_{\lambda_A,\lambda_B}\mathbf{1}_{\lambda_A}\ot \mathbf{1}_{\lambda_B}\ot \operatorname{P}^{\lambda_A \lambda_B}_{as}.
\ee
Let us define the following.  For an arbitrary matrix $C$, the state $\Psi[C]$, which is defined by taking matrix elements $c_{ij}$ of $C$ an setting them as coefficients in standard basis $|i\> \ot |j\> $  on subsystems $A$ and $B$, i.e. $\Psi[C]=\sum_{ij}c_{ij}|i\> \ot |j\>$. Now
thanks to property
\begin{equation}
\label{prop}
X\ot Y \Psi[C]=\Psi\left[XCY^T \right],\quad X,Y-\text{arbitrary matrices},
\end{equation}
we can write 
\be
\operatorname{P}_{as}^{\lambda_A \lambda_B}\Psi[C]=\frac{1}{n!}\Psi\left[\sum_{\sigma\in S(n)}\operatorname{sgn}(\sigma) \left(V_{\sigma}^A \right)_{\lambda_A}C\left(V_{\sigma}^{B} \right)_{\lambda_B}^T\right].
\ee
Using considerations from previous sections we see, that for $\lambda_B=\lambda_A^t$ there is always nonsingular matrix $U$ such that
$U\left(V_{\sigma}^{B} \right)_{\lambda_B=\lambda_A^{t}}U^{\dagger}=\operatorname{\text{sgn}}(\sigma) \left(V_{\sigma}^A \right)_{\lambda_A}$. Because of this equivalence and Schur lemma we can write, that $C \sim U$. Now taking into account formula~\eqref{prop} we have $\Psi[U]=\sum_{ij}u_{ij}|i\>\ot |j\>$, where $u_{ij}$ are matrix elements of unitary transformation $U$ (see Proposition~\ref{MH}), so
\begin{equation}
\operatorname{P}_{as}=\frac{1}{n!}\bigoplus_{\lambda}\mathbf{1}_{\lambda}^A\ot \mathbf{1}_{\lambda^t}^B\ot W_{\lambda \lambda^t}^{AB},
\end{equation}
where operator $W_{\lambda \lambda^t}^{AB}$ is given by equation~\eqref{W}.
\end{proof}
Finally is worth to mention that similar decomposition was done for symmetric projector in~\cite{Matsumoto1}, where authors consider entanglement concentration for many copies of unknown pure states and propose some protocol which produces perfect maximally entangled state.

\section{Conclusions}
In this paper we present and discuss explicit method of constructing
unitary maps between two arbitrary but equivalent irreducible
representations of some finite group $G$ (Lemma~\ref{problem:1}).  We
observe a few interesting properties in the general case, such as the
doubly stochastic property (Remark~\ref{stoch}) or and a
generalization of the classical orthogonality relation for irreducible
representations (Corollary~\ref{ortRel}). In the next part  we apply
of our method to the symmetric group $S(N)$
(Example~\ref{exx1},~\ref{exx2},~\ref{exx3},~\ref{exx4},~\ref{exx5}
and finally~\ref{exx6}) which give us the clue that whenever we use as
a basis the Young-Yamanouchi basis our transformation matrices $U$
between Young-Yamanouchi conjugated irreps have anti-diagonal form
with $\pm 1$ (see Theorem~\ref{hypo1}).    We hope that our results
will be useful for numerical work involving $S(N)$ and other group
symmetry.

\section{Acknowledgment}
The authors would like to thanks Aram W. Harrow, Mary Beth Ruskai for numerous discussions and to Josh Grochow for useful references.
The authors would like to thank also to Issac Newton Institute for
hospitality where some part of this work was done. M.~S.~is supported
by the International PhD Project "Physics of future quantum-based
information technologies": grant MPD/2009-3/4 from Foundation for
Polish Science and Grant NCN Maestro (DEC- 2011/02/A/ST2/00305). M.~M. and M.~H. are supported by MNiSW Ideas-Plus
Grant(IdP2011000361).  
Part of this work was done in National Quantum
Information Centre of Gda\'nsk.

\end{document}